\begin{document}

\title{
Stability and asymptotic behavior of periodic traveling wave solutions of 
viscous conservation laws in several dimensions\footnote{Preprint: October 7, 2008, Modified: October 7, 2008}}

\author{Myunghyun Oh\thanks{Research of M.O. was partially supporte
under NSF grant No. DMS-0204072.}
and Kevin Zumbrun\thanks{Research of K.Z. was partially supported
under NSF grants no. DMS-0070765 and DMS-0300487.}}
\institute{Department of Mathematics, University of Kansas, USA;
moh@math.ku.edu}
\institute{Department of Mathematics, Indiana University, USA;
kzumbrun@indiana.edu}

\maketitle

\begin{abstract} 
Under natural spectral stability assumptions 
motivated by previous investigations of the associated spectral stability
problem,
we determine sharp $L^p$ estimates on the linearized solution operator 
about a multidimensional planar periodic wave
of a system of conservation laws with viscosity,
yielding linearized $L^1\cap L^p\to L^p$ stability
for all $p \ge 2$ and dimensions $d \ge 1$ and 
nonlinear $L^1\cap H^s\to L^p\cap H^s$ stability
and $L^2$-asymptotic behavior for $p\ge 2$ and $d\ge 3$.
The behavior can in general be rather complicated, involving
both convective (i.e., wave-like) and diffusive effects. 
\end{abstract}




\newcommand{\curl}{\hbox{ \rm curl }}
\newcommand{\rmO}{\mathrm{O}}
\newcommand{\rmo}{\mathrm{o}}
\newcommand{\rmd}{\mathrm{d}}
\newcommand{\rme}{\mathrm{e}}
\newcommand{\rmi}{\mathrm{i}}

\newcommand{\R}{\mathbb{R}}
\newcommand{\C}{\mathbb{C}}
\newcommand{\Z}{\mathbb{Z}}
\newcommand{\N}{\mathbb{N}}

\newcommand{\op}{{\cal L}}
\newcommand{\Or}{{\cal O}}
\newcommand{\oOr}{{o}}
\newcommand{\setP}{{\cal P}}
\newcommand{\CalA}{{\cal A}}
\newcommand{\CalD}{{\cal D}}
\newcommand{\e}{\hbox {\rm e}}
\newcommand{\vp}{\varphi}
\newcommand{\bu}{\bar{u}}
\newcommand{\du}{\dot{u}}
\newcommand{\bX}{{\bar{X}}}
\newcommand{\bq}{\bar{q}}
\newcommand{\bomega}{\bar{\omega}}
\newcommand{\hv}{\hat{v}}
\newcommand{\dv}{\dot{v}}
\newcommand{\ha}{\hat{a}}
\newcommand{\hxi}{\hat{\xi}}
\newcommand{\hlambda}{\hat{\lambda}}
\newcommand{\txi}{\tilde{\xi}}
\newcommand{\x}{{\xi}_1} 
\newcommand{\tx}{\tilde{x}}
\newcommand{\ty}{\tilde{y}}
\newcommand{\td}{\tilde}
\newcommand{\pd}{{\partial}}
\newcommand{\tw}{\tilde{w}}
\newcommand{\bw}{\bar{w}}
\newcommand{\dw}{\dot{w}}
\newcommand{\hL}{\hat{L}}
\newcommand{\eps}{\epsilon}
\newcommand{\dps}{\displaystyle}
\newcommand{\Rp}{\hbox{Re}}
\newcommand{\Ip}{\hbox{Im}}
\newcommand{\grad}{\nabla}

\newcommand\be{\begin{equation}}
\newcommand\ee{\end{equation}}
\newcommand\ba{\begin{eqnarray}}
\newcommand\ea{\end{eqnarray}}
\newcommand\ds{\displaystyle}
\newcommand\nn{\nonumber}
\newcommand\bea{\begin{array}}
\newcommand\ena{\end{array}}

\section{Introduction}\label{intro}

Nonclassical viscous conservation laws
arising in multiphase fluid and solid mechanics
exhibit a rich variety of traveling wave phenomena,
including homoclinic (pulse-type) and periodic solutions
along with the standard heteroclinic (shock, or front-type)
solutions.
Here, we investigate stability of periodic traveling waves:
specifically, sufficient conditions for stability of the wave.  
Our main result is to establish $L^p$ bounds   
on the solution operator for the linearized evolution
equations, provided that there holds an appropriate
spectral condition on the linearized 
operator about the wave. 
An immediate consequence is that, under mild nondegeneracy
assumptions motivated by the low-frequency spectral analysis
of \cite{OZ3}, {\it strong spectral stability
in the sense of Schneider \cite{S1,S2,S3} 
implies linearized and nonlinear
$L^1 \cap H^s\to L^p\cap H^s$ asymptotic stability}, for all $p \ge 2$
and dimensions $d\ge 3$.

The one-dimensional study on spectral stability of spatially periodic 
traveling waves of systems of viscous conservation laws
was carried out by Oh \& Zumbrun \cite{OZ1} 
in the ``quasi-Hamiltonian'' case that the traveling-wave equation
possesses an integral of motion, and in the general case
by Serre \cite{Se1}. 
An important contribution of Serre was to point out a 
larger connection between the linearized dispersion relation 
(the function $\lambda(\xi)$ relating spectra to wave number of the linearized
operator about the wave) near zero
and the homogenized system obtained by slow modulation, or WKB, approximation,
from which the various stability results of \cite{OZ1}, \cite{Se1}
may then be deduced.
In \cite {OZ3}, we extended this important observation of Serre, 
relating the linearized dispersion relation near zero to a multi-dimensional
version of the homogenized system. 
As an immediate corollary, similarly as in \cite{OZ1}, \cite{Se1} in the
one-dimensional case, this yielded as a necessary condition for
multi-dimensional stability the hyperbolicity of the multi-dimensional
homogenized system.

As noted in \cite{OZ3}, 
this relation is also the first step in the study of stability
and asymptotic behavior. 
In this article, we use the description of low-frequency
spectrum carried out in \cite{OZ3}
to obtain nonlinear stability and asymptotic behavior in dimensions
$d\ge 3$ by a modification of the Bloch decomposition
arguments introduced by Schneider in \cite{S1,S2,S3};
see Theorems \ref{t:linstab}, \ref{t:nonlinstab}, and \ref{t:behavior}.
Here, the main new difficulty is
the fact that spectra $\lambda(\xi)$ bifurcating from the
translational zero eigenvalue at $\xi=0$ are not smooth at
the origin, a standard feature of hyperbolic--parabolic
systems in multi-dimensions \cite{ZS,Z1}.
This is not only a technical issue, but reflects
quite different behaviors in the present vs. previously considered cases.
Whereas asymptotic behavior in \cite{S1,S2,S3} was purely diffusive,
corresponding to a Gaussian kernel, 
the behavior here is convective--diffusive, 
corresponding asymptotically to a convection--diffusion wave 
in the sense of \cite{HoZ1}.
Stability and behavior in dimensions one and two
remain interesting open problems.

\section{Preliminaries}\label{prelim}
Consider a system of conservation laws
\be
u_t + \sum_j f^j(u)_{x_j} = \sum_{j, \; k} \left( B^{jk}(u)u_{x_k} \right)_{x_j},
\label{e:cons}
\ee
$u \in {\cal U} (\hbox{open}) \in \R^n$,  $f^j \in \R^n$, 
$B^{jk} \in \R^{n \times n}$, $x \in \R^d$, 
and a periodic traveling wave solution 
\be
u=\bar{u}(x\cdot \nu -st),
\ee
of period $X$, satisfying the traveling-wave 
ordinary differential equation (ODE)
\be
( \sum_{j,k}  \nu_j \nu_k B^{jk}(\bar u) \bar u' )'= 
( \sum_j \nu_j f^j(\bar u) )'-s\bar u'
\label{e:second_order}
\ee
with initial conditions
$$
\bar u(0) = \bar u(X)=:u_0.	
$$
Integrating (\ref{e:second_order}), we reduce to a first-order profile 
equation
\be
\sum_{j,k}  \nu_j \nu_k B^{jk}(\bar u) \bar u'= 
\sum_j \nu_j f^j(\bar u) -s \bar u -q
\label{e:profile}
\ee
encoding the conservative structure of the equations,
where $q$ is a constant of motion.

We here briefly review the generic assumptions made in \cite{OZ3,Se1}.
Given 
$$
(a, s, \nu, q) \in {\cal U} 
\times \R \times S^{d-1} \times \R^n, 
$$
(\ref{e:profile}) admits a unique local
solution $u(y; a, s, \nu, q)$ such that $u(0; a, s, \nu, q)=a$. 
Denote by $X$ the period, $\omega:=1/X$ the frequency
and $M$ and $F^j$ the averages over the period:
$$
M :=\frac{1}{X} \int_0^X u(y) dy, \quad
F^j:= 
\frac{1}{X} \int_0^X 
\Big(f^j(u)- 
\sum_{k=1}^d B^{jk}(u)\omega \nu_k \partial_y u\Big) dy	
$$
when $u$ is a periodic solution of (\ref{e:profile}). Since 
these quantities are translation invariant, we consider 
the set $P$ of periodic functions $u$ that are solutions
of (\ref{e:profile}) for some triple $(s, \nu,q)$, and construct
the quotient set $\setP:=P/\cal R$ under the relation
$$
(u \; {\cal R} \; v) \Longleftrightarrow ( \exists h \in \R;\; v=u(\cdot-h)).
$$
We thus have class functions:
\begin{equation}\label{classdef}
X=X(\dot{u}),\; \omega=\Omega(\dot{u}),\;
s=S(\dot{u}),\; 
\nu=N(\dot u) , \; 
q=Q(\dot{u}),\; M=M(\dot{u}), \; F^j=F^j(\dot u),
\end{equation}
where $\dot{u}$ is the equivalence class of translates of different periodic 
functions.
Note that $\bu$ is a nonconstant periodic solution. Without loss of generality,
assume $S(\bu)=0$ and $N(\bu)=e_1$, 
%
so that (\ref{e:profile}) takes the form
$$
B^{11}(\bu)\bu'=f^1(\bu)-\bar{q}
$$
for $\bar{q}=Q(\bu)$.
Letting
$\bar{X}=X(\bu)$ and $\bar{a}=\bu(0)=u_0$, the map 
$$
(y, a, s, \nu, q) \mapsto u(y; a,s, \nu,q)-a
$$
is smooth and well-defined in a neighborhood
of $(\bar{X};\bar{a},0, e_1, \bar{q})$, and it vanishes at this special point.
Here and elsewhere, $e_j$ denotes the $j$th standard Euclidean basis element.
We assume: 

(H0) $f^j$, $B^{jk}\in C^k$, $k\ge [d/2]+1$.

(H1) $ \hbox{Re} \; \sigma(\sum_{jk} \nu_j \nu_k B^{jk})\ge \theta>0$
for all $\nu \in S^{d-1}$. 

(H2) The map 
$H: \, 
\R \times {\cal U} \times \R \times S^{d-1} \times \R^n  \rightarrow \R^n$	
taking 
$(X; a, s, \nu, q)  \mapsto u(X; a, s, \nu, q)-a$
is a submersion at point $(\bar{X}; \bar{a}, 0, e_1, \bar{q})$.
\section{The Evans function}\label{evans}

Without loss of generality taking $S(\bu)=0$, $N(\bu)=e_1$,
$\bar u=\bar{u}(x_1)$ represents a stationary solution. 
Linearizing (\ref{e:cons}) about 
$\bar{u}(\cdot)$, we obtain
\be
v_t = Lv := \sum(B^{jk}v_{x_k})_{x_j}-\sum(A^j v)_{x_j},
\label{e:lin}
\ee
where coefficients
\be
B^{jk} := B^{jk}(\bu), \;\;\;
A^jv:= Df^j(\bu)v-(DB^{j1}(\bu)v)\bu_{x_1}
\label{e:coeff}
\ee
are now periodic functions of $x_1$. 

Taking the Fourier transform in the transverse coordinate $\td{x}=
(x_2, \cdots, x_d)$, we obtain
\ba
\hv_t = L_{\td{\xi}}\hv 
& = &(B^{11} \hv_{x_1})_{x_1} -(A^1 \hv)_{x_1}
	+i (\sum_{ j\ne 1} B^{j1} \xi_j) \hv_{x_1} 	\nn \\ 
& + & i(\sum_{k\ne 1}B^{1k} \xi_k \hv)_{x_1} 
	- i \sum_{j\ne 1}A^j \xi_j \hv 
	- \sum_{j\ne 1, k\ne 1} B^{jk}\xi_k \xi_j \hv,
\label{e:fourier}
\ea
where $\td{\xi}=(\xi_2, \cdots, \xi_d)$ is the transverse frequency
vector.
The Laplace transform in time $t$ leads us to study the family of
eigenvalue equations
\ba
0=(L_{\txi}- \lambda) w
&=& (B^{11}w')'-(A^1 w)'+i \sum_{ j\ne 1}B^{j1} \xi_j w'
	+i(\sum_{k \ne 1} B^{1k} \xi_k w)'	\nn \\
&-& i\sum_{j \ne 1}A^j \xi_j w - \sum_{j\ne 1, k\ne 1}B^{jk}
	\xi_k \xi_j w - \lambda w,
\label{e:laplace}
\ea
associated with operators $L_{\txi}$ and frequency $\lambda \in \C$,
where `$'$' denotes $\partial/\partial x_1$.
Clearly, a necessary condition for stability of (\ref{e:cons}) is
that (\ref{e:laplace}) have no $L^2$ solutions $w$ 
for $\txi \in \R^{d-1}$ and Re $\lambda >0$. 
For solutions of
(\ref{e:laplace}) correspond to normal modes $v(x,t)
=\e^{\lambda t} \e^{i \txi \cdot \tx} w(x_1)$ of (\ref{e:lin}).

Multidimensional stability concerns primarily
the behavior of the perturbation of the top eigenvalue
$\lambda=0$ under small perturbations in $\txi$.
To study this behavior, we use Floquet's theory and an Evans function 
that not only depends on $\lambda$ but also on $\xi_1$ (which corresponds
to the phase shift) and $\txi$ \cite{G,OZ3}.
To define the Evans function, we choose a basis 
$\{w^1(x_1, \txi, \lambda), \ldots,
w^{2n}(x_1, \txi, \lambda) \}$ of the kernel of $L_{\txi}-\lambda$, which is
analytic in $(\txi, \lambda)$ and is real when $\lambda$ is real, for details
see \cite{OZ1,Se1}. 
Now we can define the Evans function by
\be
D(\lambda, \xi_1, \txi) := \left | 
\begin{array}{c}
  w^l(X, \txi, \lambda)-\e^{iX\xi_1}w^l(0, \txi, \lambda)  \\
  (w^l) '(X, \txi, \lambda)-\e^{iX\xi_1}(w^l) '(0, \txi, \lambda) \\
\end{array}
\right |_{1 \le l \le 2n}
\label{e:evans}
\ee
where $\xi_1 \in \R$. 
We remark that $D$ is analytic everywhere, with associated
analytic eigenfunction $w^l$ for $1 \le l \le 2n$. 
A point $\lambda$ is in the spectrum
of $L_{\txi}$ if and only if  $D(\lambda, \xi)=0$ with $\xi=(\xi_1, \txi)$.

\begin{example}
In the constant-coefficient case, 
$
D(\lambda, \xi)= \prod_{l=1}^{2n} (\e^{\mu_l(\lambda, \txi)X}-\e^{i \xi_1 X}),
$
where $\mu_l, l=1, \ldots, 2n$, denote the roots of the characteristic equation
\ba
&& \left( \mu^2 B^{11}+ \mu(-A^1 +i \sum_{ j\ne 1}B^{j1} \xi_j 
	+i\sum_{k \ne 1} B^{1k} \xi_k ) \right.	\nn \\
&-& \left. ( i\sum_{j \ne 1}A^j \xi_j  + \sum_{j\ne 1, k\ne 1}B^{jk}
	\xi_k \xi_j  + \lambda I ) \right) \bar{w} =0,
\label{e:const_char}
\ea
with $w=\e^{\mu x_1} \bar{w}$. The zero set of $D$ consists of all 
$\lambda$ and $\xi_1$
such that 
$
\mu_l(\lambda, \txi)=i \xi_1 (\hbox{mod} 2 \pi i /X)
$
for some $l$. 
Setting $\mu= i \xi_1$ in (\ref{e:const_char}), 
we obtain the dispersion relation $(- B^{\xi} - i A^{\xi} - \lambda I)=0$,
where $A^{\xi}= \sum_j A^j \xi_j$ and $B^{\xi}= \sum_{j, k} B^{jk} \xi_k \xi_j$.
\end{example}


\section{WKB expansion and the low-frequency limit}\label{spectral}
We now recall the results of \cite{OZ3} describing
low-frequency spectral behavior. 
As a consequence of (H0), (H2), there
is a smooth $n+d$ dimensional manifold $\setP$ of periodic solutions
$\dot u$ in the vicinity of $\bar u$, where $d$ is the spatial dimension.
On this set, one may obtain, 
rescaling by $(x,t)\to (\epsilon x, \epsilon t)$
and carrying out a formal WKB expansion as $\epsilon \to 0$ 
a closed system of $n+d$ averaged, or homogenized, equations
\ba
\label{e:wkb}
\nn
\partial_t M(\dot u) + \sum_j \partial_{x_j}(F^j(\dot u)) &=&0,  \nn \\
\partial_t (\Omega N(\dot u)) + \nabla_x  (\Omega S(\dot u))&=&0
\ea
in the $(n+d)$-dimensional unknown $\dot u$,
expected to correspond to large time-space behavior,
with an additional constraint
\begin{equation}\label{con}
\curl (\Omega N)\equiv 0
\end{equation} 
coming from the derivation of the formal expansion:
specifically, the assumption that $\Omega N$
represent the gradient $\nabla_x \phi$ of a certain phase function $\phi(x,t)$.
Here, $\Omega$, $M$, etc. are defined as in (\ref{classdef});
see \cite{OZ3} for details.

The long-time behavior of perturbations of
$\bar u$ can thus be studied formally by considering
the linearized equations of (\ref{e:wkb}) about the constant solution
$\dot u(x,t)\equiv u^0$, $u^0\sim \bar u$,
yielding the homogeneous degree $n+d$ linearized dispersion relation
\be\label{e:hatDelta}
\hat\Delta(\xi, \lambda):=
\det \left( 
\lambda \frac{\pd (M, \Omega N)}{\pd \dot{u}}(\dot{\bu})+
\sum_{j } i \xi_j 
\frac{\pd (F^j, 
S\Omega e_j)}{\pd \dot{u}}(\dot{\bu})  
\right )
=0.
\ee
Alternatively, it may be studied rigorously through low-frequency
expansion of the {Evans function} $D(\xi, \lambda)$, $\xi\in \R^d$,
$\lambda \in \C$,  which yields after some standard but somewhat
involved manipulations
\be
\label{e:Dexpansion}
D(\xi, \lambda)= \Delta_1(\xi, \lambda) + \Or(|\xi,\lambda|^{n+2}),
\ee
where $\Delta_1$ is a homogeneous degree $n+1$ polynomial
expressed as the determinant of a rather complicated $2n\times 2n$ matrix
in $(\xi$, $\lambda)$. 
The zero set $(\xi, \lambda(\xi))$ of $\Delta_1$ thus determines
the linearized dispersion relation for (\ref{e:cons}), with $\lambda(\xi)$
running over the tangent cone at $\xi=0$ to the surface of
low-frequency spectrum of $L$ as $\xi$ runs over $\R^d$.

Our main result in \cite{OZ3} was the following proposition
relating these two expansions,
generalizing the result of \cite{Se1} in the one-dimensional case.
Define
\be\label{e:Delta}
\Delta(\xi,\lambda):=\lambda^{1-d}\hat\Delta(\xi,\lambda),
\ee
where $\hat \Delta$ is defined as in (\ref{e:hatDelta}).

\begin{proposition} \cite{OZ3} \label{main}
Under assumptions (H0)--(H3),  $\Delta_1=\Gamma_0 \Delta$, i.e.,
\be
\label{e:tangent}
D(\xi, \lambda)= \Gamma_0 \Delta(\xi, \lambda)
+ \Or(|\xi, \lambda|^{n+2})
\ee
$\Gamma_0\ne0$ constant,
for $|\xi, \lambda|$ sufficiently small.
\end{proposition}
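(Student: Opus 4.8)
The plan is to establish the identity $\Delta_1 = \Gamma_0 \Delta$ by showing that both sides are homogeneous degree $n+1$ polynomials in $(\xi,\lambda)$ arising from a common construction, namely the low-frequency (Taylor) expansion of the Evans function $D$ near the translational eigenvalue at the origin, and that the WKB/homogenized dispersion relation $\hat\Delta$ is precisely the leading-order piece of that same expansion. First I would set up a Lyapunov--Schmidt reduction of the eigenvalue equation \eqref{e:laplace} near $(\xi,\lambda)=(0,0)$: the kernel of $L_0$ at $\lambda=0$ is spanned by $\bar u'$ (the translational mode) together with the $d-1$ additional "modulational" directions coming from the $n+d$-parameter manifold $\setP$ of nearby periodic solutions, and the relevant $L^2$-pairings against the adjoint null space give an $(n+d)\times(n+d)$ reduced determinant whose vanishing is equivalent, up to the stated error, to $D(\xi,\lambda)=0$. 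The key computation is to identify this reduced determinant, after dividing out the irrelevant analytic nonzero factor $\Gamma_0$ (which accounts for the spectral projection away from the small eigenvalues and the normalization of the basis $\{w^l\}$), with the determinant in \eqref{e:hatDelta} defining $\hat\Delta$, and then to account for the factor $\lambda^{1-d}$ in \eqref{e:Delta} by the fact that $d-1$ of the rows of the Evans-reduced matrix carry an extra power of $\lambda$ relative to $\hat\Delta$ (coming from the curl constraint \eqref{con} / the $\Omega N = \nabla_x\phi$ structure, which enters the Evans determinant one differentiation order higher than the conservative variables $M$).

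The steps, in order, are: (i) recall from \cite{OZ3} the explicit form of $\Delta_1$ as a $2n\times 2n$ determinant obtained by expanding \eqref{e:evans}, block-decomposing the $w^l$ into "fast-decaying" and "slowly varying" (periodic-plus-correction) parts and extracting the leading homogeneous term; (ii) perform the matching WKB expansion of the linearized homogenized equations \eqref{e:wkb}, linearized about $\dot{\bar u}$, to obtain $\hat\Delta$ as in \eqref{e:hatDelta}, paying attention to which variables are $M$-type (conservation laws, first order in $\xi,\lambda$) versus $\Omega N$-type (phase gradient, constrained by \eqref{con}); (iii) build an explicit linear change of coordinates on $\C^{2n}$ relating the Evans basis data $(w^l(X),\, (w^l)'(X))$ evaluated via the profile derivatives $\partial_a u, \partial_s u, \partial_\nu u, \partial_q u$ to the homogenized variables $(M,\Omega N, F^j, S\Omega e_j)$ and their differentials at $\dot{\bar u}$ — this is where (H2), i.e. the submersion hypothesis, is used to guarantee the change of coordinates is nondegenerate and to produce the nonzero constant $\Gamma_0$; (iv) read off from that change of coordinates the power-counting in $\lambda$ that converts $\hat\Delta$ into $\Delta=\lambda^{1-d}\hat\Delta$, namely that the $(d-1)$-dimensional curl-constraint block contributes rows proportional to $\lambda$; (v) conclude $\Delta_1=\Gamma_0\Delta$ by comparing leading homogeneous terms, and hence \eqref{e:tangent} with the claimed $\Or(|\xi,\lambda|^{n+2})$ remainder inherited from \eqref{e:Dexpansion}.

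I expect the main obstacle to be step (iii): bookkeeping the correspondence between the Evans-function data and the homogenized variables. The Evans function is built from solutions of the profile ODE and their derivatives with respect to the parameters $(a,s,\nu,q)$, while $\hat\Delta$ is built from derivatives of the averaged quantities $(M,\Omega N,F^j,S\Omega e_j)$ with respect to the class variable $\dot u$; relating these requires differentiating the period map $H$ and the averaging operators through the ODE flow, and carefully tracking how the transverse frequency $\txi$ enters \eqref{e:laplace} (the terms $i\sum_{j\ne1}B^{j1}\xi_j w'$, $i\sum_{j\ne1}A^j\xi_j w$, etc.) so that they reproduce the $\xi_j$-coefficients $\partial(F^j,S\Omega e_j)/\partial\dot u$ rather than some other combination. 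The homogeneity and degree count must come out exactly to degree $n+1$, which is a nontrivial consistency check: $n$ of the dimensions come from the conservative variables $M$ (each contributing order one) and one net power from the phase/frequency sector after cancellation against $\lambda^{1-d}$, and getting this arithmetic to close is the crux. Once the coordinate identification is in hand, the remaining assertions — nonvanishing of $\Gamma_0$, the remainder order — follow from (H2) and \eqref{e:Dexpansion} respectively, and are essentially formal.
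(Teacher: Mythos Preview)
This proposition is not proved in the present paper: it is stated with attribution to \cite{OZ3} and simply recalled as background for the later stability analysis. There is therefore no proof here against which to compare your proposal.

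That said, your outline is a reasonable reconstruction of the argument in \cite{OZ3} (and its one-dimensional antecedent \cite{Se1}): the Evans determinant is indeed expanded by differentiating the profile solution $u(\cdot;a,s,\nu,q)$ with respect to its parameters and relating the resulting columns to the differentials of the averaged quantities $(M,\Omega N,F^j,S\Omega e_j)$, with (H2) supplying nondegeneracy of the change of basis and hence $\Gamma_0\ne0$. One small correction of emphasis: the argument in \cite{OZ3} is a direct determinant computation via variation-of-constants on the eigenvalue ODE rather than a Lyapunov--Schmidt reduction in the functional-analytic sense; the reduced $(n+d)\times(n+d)$ structure emerges from block elimination in the $2n\times 2n$ Evans matrix, not from projecting onto a kernel in $L^2$. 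Your identification of step (iii) as the crux --- tracking how the $\tilde\xi$-terms in \eqref{e:laplace} reproduce the $\partial F^j/\partial\dot u$ columns, and how the curl-constraint block contributes the $\lambda^{d-1}$ factor --- is accurate, and is where the bulk of the computation in \cite{OZ3} lies.
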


That is, up to an additional factor of $\lambda^{d-1}$ 
(corresponding to spurious modes not satisfying constraint (\ref{con};
see \cite{OZ3} for further discussion)
the dispersion relation (\ref{e:hatDelta}) for the averaged system
(\ref{e:wkb}) indeed describes
the low-frequency limit of the exact linearized dispersion relation 
$ D(\xi,\lambda)=0.  $


\begin{corollary} \cite{OZ3} \label{c:surfaces}
Assuming (H0)--(H3) and the nondegeneracy condition
\be\label{e:nondeg}
\det \left (\frac{\pd(M, \Omega N)}{\pd \dot{u}}(\dot{\bu}) \right ) \ne0,
\ee
then for $\lambda, \xi$ sufficiently small,
the zero-set of $D(\cdot, \cdot)$, corresponding to spectra of $L$, 
consists of $n+1$ characteristic surfaces:
\be
\label{e:surfaces}
\lambda_j(\xi)=-i a_j(\xi)   + o(\xi), \;\; j=1, \ldots, n+1,
\ee
where $a_j(\xi)$ denote the homogeneous, degree one eigenvalues of 
\be\label{e:Cala}
\CalA(\xi):= \sum_{j } \xi_j \frac{\pd (F^j, S\Omega e_j)} {\pd(M, \Omega N)},
\ee
excluding $(d-1)$ identically zero eigenvalues associated
with modes not satisfying (\ref{con}).
\end{corollary}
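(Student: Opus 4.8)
The plan is to derive Corollary \ref{c:surfaces} directly from Proposition \ref{main} together with the nondegeneracy hypothesis \eqref{e:nondeg}. First I would observe that, by \eqref{e:tangent}, the zero set of $D(\xi,\lambda)$ near the origin agrees, to the relevant order, with the zero set of $\Delta(\xi,\lambda)=\lambda^{1-d}\hat\Delta(\xi,\lambda)$. Writing $\hat\Delta$ out as in \eqref{e:hatDelta}, factor out the invertible matrix $\tfrac{\pd(M,\Omega N)}{\pd\dot u}(\dot{\bu})$ — legitimate precisely because of \eqref{e:nondeg} — to obtain
\be
\hat\Delta(\xi,\lambda)=\det\left(\tfrac{\pd(M,\Omega N)}{\pd\dot u}(\dot{\bu})\right)\,
\det\bigl(\lambda I + i\,\CalA(\xi)\bigr),
\ee
with $\CalA(\xi)$ the homogeneous degree-one matrix \eqref{e:Cala}. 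Thus $\hat\Delta$ vanishes exactly when $\lambda$ is an eigenvalue of $-i\CalA(\xi)$, i.e.\ $\lambda=-i a_j(\xi)$ for the $n+d$ homogeneous degree-one eigenvalues $a_j(\xi)$ of $\CalA(\xi)$.

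Next I would address the factor $\lambda^{1-d}$ in the definition \eqref{e:Delta}. Dividing $\hat\Delta$ by $\lambda^{d-1}$ removes $d-1$ of the roots $\lambda=0$; concretely, one checks from the block structure of $\CalA(\xi)$ — the second block of rows coming from the phase equations $\partial_t(\Omega N)+\nabla_x(\Omega S)=0$ and carrying only first-order-in-$\xi$ coupling — that $\CalA(\xi)$ has (at least) $d-1$ eigenvalues identically zero, associated with the modes excluded by the constraint \eqref{con} (the curl-free condition on $\Omega N$). These are the spurious modes referred to after Proposition \ref{main}. Removing them leaves exactly $n+1$ branches, and $\Delta(\xi,\lambda)=\text{const}\cdot\det\bigl(\lambda I+i\CalA(\xi)\bigr)/\lambda^{d-1}$ is a homogeneous degree-$(n+1)$ polynomial whose roots in $\lambda$ are the remaining eigenvalues $\lambda=-i a_j(\xi)$, $j=1,\dots,n+1$.

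It then remains to pass from the zero set of the leading polynomial $\Delta$ (equivalently $\Delta_1=\Gamma_0\Delta$) to that of the full Evans function $D$. Since $D=\Gamma_0\Delta+\Or(|\xi,\lambda|^{n+2})$ with $\Gamma_0\ne0$ and $\Delta$ homogeneous of degree $n+1$, a Rouché / Weierstrass-preparation argument on each conic sector where the $a_j$ are separated shows that the true roots $\lambda_j(\xi)$ of $D$ satisfy $\lambda_j(\xi)=-i a_j(\xi)+o(|\xi|)$, which is \eqref{e:surfaces}. One should take care that the $a_j(\xi)$ need only be continuous and homogeneous degree one (not smooth), so the correction is stated as $o(\xi)$ rather than a higher-order power; where eigenvalues of $\CalA(\xi)$ coalesce, one argues on the product (the characteristic polynomial) rather than individual branches, so no extra smoothness is needed.

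The main obstacle I anticipate is not the algebraic factorization — that is essentially bookkeeping given \eqref{e:nondeg} — but rather the careful justification that exactly $d-1$ eigenvalues of $\CalA(\xi)$ are identically zero and that these are precisely the modes violating \eqref{con}, so that the division by $\lambda^{d-1}$ in \eqref{e:Delta} is exactly compensated and the count ``$n+1$ characteristic surfaces'' is sharp. This requires unpacking the block structure of $\tfrac{\pd(F^j,S\Omega e_j)}{\pd\dot u}$ inherited from the homogenized system \eqref{e:wkb} and the geometric meaning of the constraint, and is the step where one genuinely uses the structure established in \cite{OZ3} rather than generic linear algebra.
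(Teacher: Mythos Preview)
The paper does not prove Corollary~\ref{c:surfaces} at all: it is quoted verbatim from \cite{OZ3} (note the citation in the corollary header), as is Proposition~\ref{main}, and no argument is supplied here. So there is no ``paper's own proof'' to compare against.

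That said, your proposal is the natural and correct derivation of the corollary from Proposition~\ref{main}, and is presumably close to what appears in \cite{OZ3}. The factorization of $\hat\Delta$ using \eqref{e:nondeg}, the identification of the $d-1$ spurious zero eigenvalues with the curl constraint \eqref{con}, and the Rouch\'e/preparation passage from the homogeneous leading part $\Delta$ to the full $D$ are exactly the ingredients required. Your closing caveat is apt: the genuinely structural step is verifying that $\CalA(\xi)$ has precisely $d-1$ identically vanishing eigenvalues corresponding to modes violating \eqref{con}, which does depend on the block form of the homogenized system \eqref{e:wkb} and is where the content of \cite{OZ3} enters. One minor point: in this paper (H3) is the strict-hyperbolicity assumption that the $a_j(\xi)$ are distinct (stated in Section~\ref{bloch}), so under the hypotheses as written you may assume simple eigenvalues and need not worry about coalescence in the Rouch\'e step.
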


\begin{corollary} \cite{OZ3} \label{stabcondition}
Assuming (H0)--(H3) and the nondegeneracy condition (\ref{e:nondeg}),
a necessary condition for low-frequency spectral stability of
$\bar u$, defined as
Re $\lambda \le 0$ for $D(\xi,\lambda)=0$, $\xi\in \R^d$, and
$|\xi,\lambda|$ sufficiently small, 
is that the averaged system (\ref{e:wkb}) be ``weakly hyperbolic''
in the sense that the eigenvalues of $ \CalA(\xi) $ are real for
all $\xi\in \R^d$.
\end{corollary}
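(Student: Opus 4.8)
The plan is to leverage Corollary \ref{c:surfaces}, which already does the heavy lifting: under (H0)--(H3) and the nondegeneracy condition (\ref{e:nondeg}), the low-frequency zero set of $D$ is exactly the union of the $n+1$ characteristic surfaces $\lambda_j(\xi)=-ia_j(\xi)+o(\xi)$, where the $a_j(\xi)$ are the homogeneous degree-one eigenvalues of $\CalA(\xi)$ (after discarding the $d-1$ spurious zero eigenvalues). The first step is simply to observe that low-frequency spectral stability, $\mathrm{Re}\,\lambda\le 0$ for all zeros $(\xi,\lambda)$ of $D$ with $|\xi,\lambda|$ small and $\xi\in\R^d$, forces $\mathrm{Re}\,\lambda_j(\xi)\le 0$ for each $j$ and all small real $\xi$.

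Next I would extract the consequence for the $a_j$. Writing $\lambda_j(\xi)=-ia_j(\xi)+o(\xi)$, we get $\mathrm{Re}\,\lambda_j(\xi)=\mathrm{Im}\,a_j(\xi)+o(|\xi|)$. The key point is homogeneity: $a_j(t\xi)=t\,a_j(\xi)$ for $t>0$, so $\mathrm{Re}\,\lambda_j(t\xi)=t\,\mathrm{Im}\,a_j(\xi)+o(t)$ as $t\to 0^+$ for each fixed $\xi\in\R^d$. If $\mathrm{Im}\,a_j(\xi_*)\ne 0$ for some $\xi_*$, then by replacing $\xi_*$ with $-\xi_*$ if necessary (which negates $a_j$, hence its imaginary part) we may assume $\mathrm{Im}\,a_j(\xi_*)>0$; then for $t$ small and positive $\mathrm{Re}\,\lambda_j(t\xi_*)>0$, contradicting stability. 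Hence $\mathrm{Im}\,a_j(\xi)=0$ for all $\xi\in\R^d$ and all $j$; since the discarded eigenvalues are identically zero (hence trivially real), all eigenvalues of $\CalA(\xi)$ are real for every $\xi\in\R^d$, which is precisely weak hyperbolicity of (\ref{e:wkb}).

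The main obstacle — and it is a minor one, since it has essentially been absorbed into Corollary \ref{c:surfaces} — is the justification that the $o(\xi)$ remainder is genuinely subdominant along rays, i.e. that $\lambda_j$ is continuous at $\xi=0$ with the stated leading term uniformly in direction, so that the scaling argument $\mathrm{Re}\,\lambda_j(t\xi_*)=t\,\mathrm{Im}\,a_j(\xi_*)+o(t)$ is valid. This follows from the structure of the expansion (\ref{e:tangent})--(\ref{e:Dexpansion}): $\Delta_1$ is a homogeneous degree $n+1$ polynomial and the error is $\Or(|\xi,\lambda|^{n+2})$, so near the origin the roots $\lambda_j(\xi)$ of $D(\xi,\cdot)$ are $O(|\xi|)$-close to the roots $-ia_j(\xi)$ of the homogeneous part, uniformly for $\xi$ on the unit sphere; one may also invoke (H1) to rule out roots escaping to large $|\lambda|$. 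With this in hand the contradiction above is immediate, completing the proof.
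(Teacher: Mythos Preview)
The paper does not actually supply a proof of this corollary; it is quoted from \cite{OZ3}, and the only hint given in the present text is the remark in Section~\ref{bloch} that ``By Corollary~\ref{c:surfaces}, either of (D1), (D2) implies that the eigenvalues $a_j(\xi)$ of (\ref{e:Cala}) are real,'' together with the comment after Corollary~\ref{c:surfaces} that $\lambda_j(\xi)$ is differentiable in $|\xi|$ at the origin for fixed angle $\hat\xi$. Your argument is correct and is precisely the intended deduction from Corollary~\ref{c:surfaces}: low-frequency stability forces $\mathrm{Re}\,\lambda_j(\xi)\le 0$, and the leading-order expansion $\lambda_j(\xi)=-ia_j(\xi)+o(\xi)$ along rays then forces $\mathrm{Im}\,a_j(\xi)=0$.

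One small simplification you could make: since $\CalA(\xi)$ is a \emph{real} matrix for $\xi\in\R^d$ (it is built from Jacobians of the real class functions $M$, $\Omega N$, $F^j$, $S\Omega$), its non-real eigenvalues occur in complex-conjugate pairs. Thus if some $a_j(\xi_*)$ is non-real, already at the same $\xi_*$ there is an eigenvalue with strictly positive imaginary part, and the detour through $-\xi_*$ is unnecessary. Either way the conclusion is immediate.
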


A consequence of Corollary \ref{c:surfaces} is that 
 $\lambda_j(\xi)$ are differentiable in $|\xi|$ at the origin for fixed 
angle $\hat \xi$,
but in general have a conical singularity in $\xi$ at $\xi=0$,
since the eigenvalues $a_j(\xi)$ of first-order system
(\ref{e:wkb}) are homogeneous degree one but typically not linear.
The low-frequency expansion of Corollary \ref{c:surfaces}
substitutes in our analysis
for the usual spectral perturbation analysis by formal
series expansion/Fredholm alternative in the standard
case (as in \cite{S1,S2,S3}) that $\lambda_j(\xi)$ vary smoothly in $\xi$.

\begin{remark}\label{ver}
\textup{
The low-frequency stability condition of Corollary \ref{stabcondition} 
has been verified numerically for the example of 
isentropic van der Waals gas dynamics \cite{O}, for which periodic
solutions are known to appear.
On the other hand, it was shown in \cite{OZ1} that high-frequency
instabilities appear for these waves, so they are not in the end
stable.
}
\end{remark}

\section{Bloch--Fourier decomposition 
and the spectral stability conditions}\label{bloch}

Following \cite{G,S1,S2,S3}, we define the family of operators
\be
L_{\xi} = \e^{-i \xi_1 x_1} L_{\txi}  \e^{i \xi_1 x_1}
\label{e:part}
\ee
operating on the class of $L^2$ periodic functions on $[0,X]$;
the $(L^2)$ spectrum
of $L_{\txi}$ is equal to the union of the
spectra of all $L_{\xi}$ with $\xi_1$ real with associated 
eigenfunctions
\be
w(x_1, \txi,\lambda) := \e^{i \xi_1 x_1} q(x_1, \x, \txi, \lambda),
\label{e:efunction}
\ee
where $q$, periodic, is an eigenfunction of $L_{\xi}$.
By continuity of spectrum, 
and discreteness of the spectrum of the elliptic operators $L_\xi$ on
the compact domain $[0,X]$,
we have that the spectra of $L_{\xi}$
may be described as the union of countably many continuous
surfaces $\lambda_j(\xi)$. 

The spectrum of each $L_{\xi}$ may alternatively be
characterized as the zero set for fixed $\xi$ of the 
periodic Evans function $D(\x, \txi, \lambda)$;
see \cite{G,OZ3}.
More, a fundamental result of Gardner \cite{G} is that {\it
the order of vanishing of the Evans function in $\lambda$ is 
equal to the multiplicity of $\lambda$ as an eigenvalue of $L_\xi$}.
Thus, we have a description of the eigenstructure of $L_\xi$
through Corollary \ref{c:surfaces} for $\lambda$, $\xi$ sufficiently
small in terms of the characteristics of the first-order
hyperbolic system (\ref{e:wkb}).

Without loss of generality taking $X=1$,
recall now the {\it Bloch--Fourier representation}
\be\label{Bloch}
u(x)=
\Big(\frac{1}{2\pi }\Big)^d \int_{-\pi}^{\pi}\int_{\R^{d-1}}
e^{i\xi\cdot x}\hat u(\xi, x_1)
d\xi_1\, d\tilde \xi
\ee
of an $L^2$ function $u$, where 
$\hat u(\xi, x_1):=\sum_k e^{2\pi ikx_1}\hat u(\xi_1+ 2\pi k,\tilde \xi)$
are periodic functions of period $X=1$, $\hat u(\tilde \xi)$
denoting with slight abuse of notation the Fourier transform of $u$
in the full variable $x$.  
By Parseval's identity, the Bloch--Fourier transform
$u(x)\to \hat u(\xi, x_1)$ is an isometry in $L^2$:
\be\label{iso}
\|u\|_{L^2(x)}= 
\|\hat u\|_{L^2(\xi; L^2(x_1))},
\ee
where $L^2(x_1)$ is taken on $[0,1]$ and $L^2(\xi)$
on $[-\pi,\pi]\times \R^{d-1}$.
Moreover, it diagonalizes the periodic-coefficient operator $L$,
yielding the {\it inverse Bloch--Fourier transform representation}
\be\label{IBFT}
e^{Lt}u_0=
\Big(\frac{1}{2\pi }\Big)^d \int_{-\pi}^{\pi}\int_{\R^{d-1}}
e^{i\xi \cdot x}e^{L_\xi t}\hat u_0(\xi, x_1)
d\xi_1\, d\tilde \xi
\ee
relating behavior of the linearized system to 
that of the diagonal operators $L_\xi$.

Along with the assumptions (H0)--(H2), we assume the
{\it strong spectral stability} conditions: 
 

(D1) $\sigma(L) \subset \{ \hbox{\rm Re} \lambda <0 \} \cup \{ 0\}$.

(D2) $\hbox{\rm Re} \sigma(L_{\xi}) \le -\theta |\xi|^2$, $\theta>0$, 
for $\xi\in \R^d$ and $|\xi|$ sufficiently small.


By Corollary \ref{c:surfaces}, either of (D1), (D2) implies that
the eigenvalues $a_j(\xi)$ of (\ref{e:Cala}) are real.
We make the further nondegeneracy hypothesis:

(H3) The values $a_j(\xi)$ are distinct.

\noindent 
Conditions (D1)--(D2) are exactly the spectral assumptions of
\cite{S1,S2,S3}, corresponding to ``dissipativity'' of the
large-time behavior of the linearized system.
Condition (H3) corresponds to strict hyperbolicity of the 
averaged system (\ref{e:wkb});
presumably it could be removed with further effort/alternative hypotheses. 

\section{Linearized stability estimates}\label{linests}

By standard spectral perturbation theory \cite{K}, the total
eigenprojection $P(\xi)$ onto the eigenspace of $L_\xi$ 
associated with the eigenvalues $\lambda_j(\xi)$, $j=1,\dots, n+1$ 
described in Corollary \ref{c:surfaces}
is well-defined and analytic in $\xi$ for $\xi$ sufficiently small,
since these (by discreteness of the spectra of $L_\xi$) are
separated at $\xi=0$ from the rest of the spectrum of $L_0$.
Introducing a smooth cutoff function $\phi(\xi)$ that
is identically one for $|\xi|\le \eps$ and identically
zero for $|\xi|\ge 2\eps$, $\eps>0$ sufficiently small,
we split the solution operator $S(t):=e^{Lt}$ into
low- and high-frequency parts
\be\label{SI}
S^I(t)u_0:=
\Big(\frac{1}{2\pi }\Big)^d \int_{-\pi}^{\pi}\int_{\R^{d-1}}
e^{i\xi \cdot x}
\phi(\xi)P(\xi) e^{L_\xi t}\hat u_0(\xi, x_1)
d\xi_1\, d\tilde \xi
\ee
and
\be\label{SII}
S^{II}(t)u_0:=
\Big(\frac{1}{2\pi }\Big)^d \int_{-\pi}^{\pi}\int_{\R^{d-1}}
e^{i\xi \cdot x}
\big(I-\phi P(\xi)\big)
e^{L_\xi t}\hat u_0(\xi, x_1)
d\xi_1\, d\tilde \xi.
\ee

\subsection{High-frequency bounds}\label{HF}
By standard sectorial bounds \cite{He,Pa} and spectral separation
of $\lambda_j(\xi)$ from the remaining spectra of $L_\xi$,
we have trivially the exponential decay bounds
\ba\label{semigp}
\|e^{L_\xi}(I-\phi P(\xi))f\|_{L^2([0,X])}
&\le & Ce^{-\theta t}\|f\|_{L^2([0,X])},\\
\|e^{L_\xi}(I-\phi P(\xi))\partial_x f\|_{L^2([0,X])}
&\le & Ct^{-\frac{1}{2}}e^{-\theta t}\|f\|_{L^2([0,X])},\\
\|\partial_x e^{L_\xi}(I-\phi P(\xi)) f\|_{L^2([0,X])}
&\le & Ct^{-\frac{1}{2}}e^{-\theta t}\|f\|_{L^2([0,X])},
\ea
for $\theta$, $C>0$.
Together with (\ref{iso}), these give immediately the
following estimates.

\begin{proposition}\label{p:hf}
Under assumptions (H0)--(H3) and (D1)--(D2),
for some $\theta$, $C>0$,
and all $t>0$, $2\le p\le \infty$,
\ba\label{SIIest}
\|S^{II}(t)f\|_{L^2(x)}&\le& Ce^{-\theta t}\|f\|_{L^2(x)},\\
\|\partial_x S^{II}(t) f\|_{L^2(x)},\;
\label{SII2}
\|S^{II}(t)\partial_x f\|_{L^2(x)}&\le& 
Ct^{-\frac{1}{2}}e^{-\theta t}\|f\|_{L^2(x))},\\
\label{SII3}
\|S^{II}(t) f\|_{L^p(x)}&\le& 
Ct^{-\frac{d}{2}(\frac{1}{2}-\frac{1}{p})}e^{-\theta t}\|f\|_{L^2(x)}.
\ea
\end{proposition}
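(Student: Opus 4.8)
The plan is to obtain Proposition~\ref{p:hf} by combining the three semigroup estimates \eqref{semigp} for the fiber operators $e^{L_\xi t}(I-\phi P(\xi))$ with the isometry property \eqref{iso} of the Bloch--Fourier transform, together with a Hausdorff--Young/Riesz--Thorin argument to pass from $L^2$ to $L^p$ in the physical variable. First I would record that, by definition \eqref{SII} of $S^{II}(t)$ and the inverse Bloch--Fourier representation \eqref{IBFT}, the Bloch symbol of $S^{II}(t)$ is exactly $(I-\phi P(\xi))e^{L_\xi t}$, so that applying the $L^2$-isometry \eqref{iso} in $(\xi,x_1)$ and then the first bound of \eqref{semigp} fiberwise in $\xi$ gives
\[
\|S^{II}(t)f\|_{L^2(x)}
=\|(I-\phi P(\xi))e^{L_\xi t}\hat f\|_{L^2(\xi;L^2(x_1))}
\le Ce^{-\theta t}\|\hat f\|_{L^2(\xi;L^2(x_1))}
=Ce^{-\theta t}\|f\|_{L^2(x)},
\]
which is \eqref{SIIest}. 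The mixed estimates \eqref{SII2} follow in the same way from the second and third lines of \eqref{semigp}, after noting that $\partial_x$ commutes through the Bloch decomposition up to the $e^{i\xi\cdot x}$ factors (i.e.\ $\partial_{x_1}$ acting on $e^{i\xi_1 x_1}q$ produces $i\xi_1$ plus the periodic derivative, but on the bounded frequency range $|\xi|\le 2\eps$ the $i\xi_1$ term is harmless and on the complementary range it is absorbed into the uniformly elliptic estimate); thus one again reduces to the fiberwise bounds and \eqref{iso}.

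For the $L^p$ bound \eqref{SII3}, the idea is to interpolate between the $L^2\to L^2$ bound just obtained and an $L^2\to L^\infty$ bound, exploiting that integrating a uniformly $L^2(x_1)$-bounded, exponentially-in-$t$-decaying family of periodic fibers against $e^{i\xi\cdot x}$ over the $d$-dimensional frequency cell $[-\pi,\pi]\times\R^{d-1}$ costs, via Hausdorff--Young, a factor controlled by $\|(I-\phi P(\xi))e^{L_\xi t}\hat f\|_{L^1(\xi;L^2(x_1))}$, and the Gaussian-type decay of $e^{L_\xi t}$ in $|\xi|$ (coming from (D2) on $|\xi|\le 2\eps$ and from the sectorial/parabolic smoothing of the uniformly elliptic $L_\xi$ away from the origin) yields $\int e^{-\theta|\xi|^2 t}\,d\xi \sim t^{-d/2}$. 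More precisely, writing $\|S^{II}(t)f\|_{L^\infty(x)}\le C\int |(I-\phi P(\xi))e^{L_\xi t}\hat f|_{L^\infty(x_1)}\,d\xi$ and bounding $|\cdot|_{L^\infty(x_1)}\le C|\cdot|_{H^1(x_1)}$ on the one-dimensional period cell, one combines the parabolic smoothing estimate $\|e^{L_\xi t}(I-\phi P(\xi))\|_{L^2(x_1)\to H^1(x_1)}\le C(1+t^{-1/2})e^{-\theta \mathrm{Re}\,\sigma(L_\xi)t}$ with a Cauchy--Schwarz in $\xi$ against $\hat f\in L^2$, getting $\|S^{II}(t)f\|_{L^\infty(x)}\le Ct^{-d/4}(1+t^{-1/2})e^{-\theta t}\|f\|_{L^2(x)}$; dropping the low-order $t^{-1/2}$ (which is dominated for $t$ bounded below, and for small $t$ merged into the stated exponent since $d/2(\tfrac12-\tfrac1p)\le d/4$) and interpolating with \eqref{SIIest} via Riesz--Thorin gives the exponent $-\tfrac{d}{2}(\tfrac12-\tfrac1p)$ claimed in \eqref{SII3}, with the $e^{-\theta t}$ surviving interpolation.

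The main obstacle I anticipate is not the low-frequency portion --- there (D2) directly supplies the needed $e^{-\theta|\xi|^2 t}$ fiberwise bound, and the eigenprojection $P(\xi)$ has been subtracted off --- but rather making the high-frequency/large-$|\xi|$ contribution uniform: one must know that the operators $L_\xi$ are \emph{uniformly} sectorial in $\xi_1\in[-\pi,\pi]$ and $\txi\in\R^{d-1}$, with resolvent and analytic-semigroup bounds and parabolic smoothing constants independent of $\xi$, so that both the exponential decay $e^{-\theta t}$ (using (D1), i.e.\ $\sigma(L)\subset\{\mathrm{Re}\,\lambda<0\}\cup\{0\}$, together with the spectral gap after removing $\phi P(\xi)$) and the $t^{-1/2}$-type smoothing hold with $\xi$-independent constants; this is exactly the content cited as ``standard sectorial bounds \cite{He,Pa}'' and ``spectral separation'' preceding \eqref{semigp}, and verifying its applicability to the full family $\{L_\xi\}$ --- including uniform ellipticity from (H1) and the boundedness of the $\xi$-dependent lower-order coefficients for $\txi$ ranging over all of $\R^{d-1}$, handled by the $B^{jk}\xi_j\xi_k$ term dominating --- is where the real work lies. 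Once that uniformity is in hand, the remaining steps are the routine Parseval/Hausdorff--Young/interpolation bookkeeping sketched above.
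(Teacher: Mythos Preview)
Your proposal is correct and follows essentially the same route as the paper: the $L^2$ bounds \eqref{SIIest}--\eqref{SII2} come directly from the fiberwise estimates \eqref{semigp} together with the Bloch--Fourier isometry \eqref{iso}, and the $L^p$ bound \eqref{SII3} is obtained by first establishing an $L^2\to L^\infty$ estimate via one-dimensional Sobolev embedding on the period cell and a Hausdorff--Young/Cauchy--Schwarz step in the frequency variable, then interpolating. The only cosmetic difference is that the paper keeps the Bloch variable $\xi_1$ and the transverse Fourier variable $\tilde\xi$ separate---applying \eqref{iso} in $x_1$ and Hausdorff--Young $\|g\|_{L^\infty(\tilde x)}\le \|\hat g\|_{L^1(\tilde\xi)}$ in $\tilde x$ individually---whereas you lump all of $\xi$ together and apply the triangle inequality/Cauchy--Schwarz over the full cell $[-\pi,\pi]\times\R^{d-1}$; since $\xi_1$ ranges over a bounded interval these are equivalent up to constants, and your identification of the uniform-in-$\xi$ sectorial bounds as the genuine technical input is exactly what the paper invokes by citation.
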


\begin{proof}
The first two inequalities follow immediately by (\ref{iso}).
The third follows for $p=\infty$ by Sobolev embedding from
$$
\|S^{II}(t) f\|_{L^p(\tilde x; L^2(x_1))}\le 
Ct^{-\frac{d-1}{2}(\frac{1}{2}-\frac{1}{p})}e^{-\theta t}\|f\|_{L^2([0,X])}
$$
and
$$
\|\partial_{x_1} S^{II}(t) f\|_{L^p(\tilde x; L^2(x_1))}\le 
Ct^{-\frac{d-1}{2}(\frac{1}{2}-\frac{1}{p})- \frac{2}{2}
}e^{-\theta t}\|f\|_{L^2([0,X])},
$$
which follow by an application of (\ref{iso}) in the $x_1$ variable and 
the Hausdorff--Young inequality $\|f\|_{L^\infty(\tilde x)}\le 
\|\hat f\|_{L^1(\tilde \xi)}$ in the variable $\tilde x$.
The result for general $2\le p\le \infty$ then follows by
$L^p$ interpolation.
\end{proof}

\subsection{Low-frequency bounds}\label{LF}
Denote by 
\be\label{GI}
G^I(x,t;y):=S^I(t)\delta_y(x)
\ee
the Green kernel associated with $S^I$, and
\be\label{GIxi}
[G^I_\xi(x_1,t;y_1)]:=\phi(\xi)P(\xi) e^{L_\xi t}[\delta_{y_1}(x_1)]
\ee
the corresponding kernel appearing within the Bloch--Fourier representation
of $G^I$, where the brackets on $[G_\xi]$ and $[\delta_y]$
denote the periodic extensions of these functions onto the whole line.
Then, we have the following descriptions of $G^I$, $[G^I_\xi]$,
deriving from the Evans function analysis of Corollary \ref{c:surfaces}.

\begin{proposition}\label{kernels}
Under assumptions (H0)--(H3) and (D1)--(D2),
\ba\label{Gxi}
[G^I_\xi(x_1,t;y_1)]&=& \phi(\xi)\sum_{j=1}^{n+1}e^{\lambda_j(\xi)t}
q_j(\xi,x_1)\tilde q_j(\xi, y_1)^*,\\
\label{Gxi2}
G^I(x,t;y)&=& 
\Big(\frac{1}{2\pi }\Big)^d \int_{\R^{d}} e^{i\xi \cdot (x-y)} 
[G^I_\xi(x_1,t;y_1)] d\xi \\
\nonumber
&=&
\Big(\frac{1}{2\pi }\Big)^d \int_{\R^{d}}
e^{i\xi \cdot (x-y)}
\phi(\xi)
\sum_{j=1}^{n+1}e^{\lambda_j(\xi)t} q_j(\xi,x_1)\tilde q_j(\xi, y_1)^*
d\xi,
\ea
where $*$ denotes matrix adjoint, or complex conjugate transpose,
$q_j(\xi,\cdot)$ and $\tilde q_j(\xi,\cdot)$ 
are right and left eigenfunctions of $L_\xi$ associated with eigenvalues
$\lambda_j(\xi)$ defined in Corollary \ref{c:surfaces},
normalized so that $\langle \tilde q_j,q_j\rangle\equiv 1$, where
$\lambda_j/|\xi|$ is a smooth function of $|\xi|$ and $\hat \xi:=\xi/|\xi|$
and $q_j$ and $\tilde q_j$ are smooth functions of
$|\xi|$, $\hat \xi:=\xi/|\xi|$, and $x_1$ or $y_1$, with 
$\Re \lambda_j(\xi)\le -\theta|\xi|^2$.
\end{proposition}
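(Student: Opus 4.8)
The plan is to establish each of the three assertions in turn, working entirely within the spectral-perturbation framework already set up. For the eigenvalue/eigenfunction decomposition \eqref{Gxi}, the starting point is the total eigenprojection $P(\xi)$, which by the remarks preceding Proposition~\ref{p:hf} is analytic in $\xi$ near the origin and has rank exactly $n+1$, with range spanned by the eigenfunctions attached to the surfaces $\lambda_j(\xi)$ of Corollary~\ref{c:surfaces}. On the range of $P(\xi)$ the operator $L_\xi$ acts as an $(n+1)\times(n+1)$ matrix, and hypothesis (H3) — the distinctness of the $a_j(\xi)$, hence of the $\lambda_j(\xi)$ for $\xi\neq 0$ small — lets one further decompose $P(\xi)=\sum_{j=1}^{n+1}P_j(\xi)$ into rank-one spectral projections $P_j(\xi)=q_j(\xi,\cdot)\langle \tilde q_j(\xi,\cdot),\,\cdot\,\rangle$ with $\langle\tilde q_j,q_j\rangle\equiv 1$. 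Then $e^{L_\xi t}P(\xi)=\sum_j e^{\lambda_j(\xi)t}P_j(\xi)$, and applying this to the periodic delta distribution $[\delta_{y_1}]$ gives \eqref{Gxi} directly, the kernel of $P_j(\xi)$ being $q_j(\xi,x_1)\tilde q_j(\xi,y_1)^*$.

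For the representation \eqref{Gxi2}, one simply feeds \eqref{GI}–\eqref{GIxi} into the inverse Bloch--Fourier formula \eqref{IBFT}: since $\widehat{[\delta_{y_1}]}(\xi,x_1)=e^{-i\xi\cdot y}$ (the Bloch transform of a periodically-extended point mass), the $\xi_1$-integral over $[-\pi,\pi]$ recombines with the lattice sum implicit in the periodic extension to produce an integral over all of $\R^d$ — this is the standard device for converting a Bloch integral against a periodic Dirac comb into a plain Fourier integral, legitimate here because $\phi$ is supported in $|\xi|\le 2\epsilon<\pi$. Substituting \eqref{Gxi} then yields the second line of \eqref{Gxi2}.

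There remain the regularity and decay claims on $\lambda_j$, $q_j$, $\tilde q_j$. The bound $\Re\lambda_j(\xi)\le -\theta|\xi|^2$ is immediate from (D2) together with the identification of $\lambda_j(\xi)$ as the small spectra of $L_\xi$. The assertion that $\lambda_j(\xi)/|\xi|$, and likewise $q_j,\tilde q_j$, are \emph{smooth} functions of $|\xi|$ and $\hat\xi=\xi/|\xi|$ — rather than merely of $\xi$ — is the substantive point, and I expect it to be the main obstacle, precisely because, as emphasized in the introduction and after Corollary~\ref{stabcondition}, the $\lambda_j$ are generically only conically (not analytically) regular at $\xi=0$. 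The way around this is to fix the direction $\hat\xi\in S^{d-1}$ and write $\xi=r\hat\xi$, $r=|\xi|\ge 0$: along each ray, $\CalA(r\hat\xi)=r\CalA(\hat\xi)$, so the reduced $(n+1)\times(n+1)$ matrix $P(\xi)L_\xi P(\xi)$ is an analytic (indeed, by the Evans expansion of Proposition~\ref{main}, suitably smooth) function of $r$ with leading term $-ir\,\CalA(\hat\xi)$; by (H3) its eigenvalues $a_j(\hat\xi)$ are simple, so standard analytic perturbation theory in the single parameter $r$ (Kato \cite{K}) gives eigenvalues $\lambda_j(r\hat\xi)=r\,\mu_j(r,\hat\xi)$ and eigenprojections depending smoothly (analytically in $r$, smoothly in $\hat\xi$ since the coefficients do) on $(r,\hat\xi)$, with $\mu_j(0,\hat\xi)=-ia_j(\hat\xi)$ — which is exactly the claimed smoothness in $(|\xi|,\hat\xi)$. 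Finally, the stated dependence of the eigenfunctions on $x_1$ (or $y_1$) and their smoothness there follows from elliptic regularity for the ODE eigenvalue problem \eqref{e:laplace} with coefficients depending smoothly on the parameters, the eigenfunctions being, via \eqref{e:efunction}, $e^{i\xi_1 x_1}$ times periodic solutions $q_j$ that inherit the parameter-smoothness of the construction.
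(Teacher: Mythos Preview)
Your proposal is correct and follows essentially the same route as the paper: spectral decomposition of the finite-rank part for \eqref{Gxi}; the Bloch--Fourier transform of $\delta_y$ together with the support of $\phi$ in $[-\pi,\pi]$ for \eqref{Gxi2}; and, for the regularity claims, Kato-type perturbation theory along rays $\xi=r\hat\xi$ (using the first-order splitting guaranteed by (H3)) for smoothness in $(|\xi|,\hat\xi)$, followed by the eigenvalue ODE for smoothness in $x_1,y_1$. Your account is in fact somewhat more explicit than the paper's on the polar-coordinate perturbation step, but the underlying argument is the same.
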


\begin{proof}
Smooth dependence of $\lambda_j$ and of $q$, $\tilde q$ as functions
in $L^2[0,X]$ follow from standard spectral perturbation theory
\cite{K} using the fact that $\lambda_j$ split to first order
in $|\xi|$ as $\xi$ is varied along rays through the origin,
and that $L_{\xi}$ varies smoothly with angle $\hat \xi$.
Smoothness of $q_j$, $\tilde q_j$ in $x_1$, $y_1$ then follow from
the fact that they satisfy the eigenvalue equation for $L_\xi$,
which has smooth, periodic coefficients.
Likewise, (\ref{Gxi}) is immediate from the spectral decomposition
of elliptic operators on finite domains.
Substituting (\ref{GI}) into (\ref{SI}) 
and computing 
\be\label{comp1}
\widehat{\delta_y}(\xi,x_1)=
\sum_k e^{2\pi i kx_1}\widehat{\delta_y}(\xi + 2\pi k e_1)=
\sum_k e^{2\pi i kx_1}e^{-i\xi \cdot y-2\pi i ky_1}
= e^{-i\xi \cdot y}[\delta_y(x)],
\ee
where the second and third equalities follow from the fact that
the Fourier transform either continuous or discrete of 
the delta-function is unity, we obtain 
\ba\label{GIsub}
G^I(x,t;y)&=&
\Big(\frac{1}{2\pi }\Big)^d \int_{-\pi}^{\pi}\int_{\R^{d-1}}
e^{i\xi \cdot x} \phi P(\xi) e^{L_\xi t} \widehat{\delta_y}(\xi,x_1)d\xi\\
\nonumber
&=&
\Big(\frac{1}{2\pi }\Big)^d \int_{-\pi}^{\pi}\int_{\R^{d-1}}
e^{i\xi \cdot (x-y)}  \phi P(\xi)e^{L_\xi t} [\delta_y(x)] d\xi,
\ea
yielding (\ref{Gxi2}) by (\ref{GIxi}) and the fact that $\phi$
is supported on $[-\pi,\pi]$.
\end{proof}

\begin{proposition} \label{Gbds}
Under assumptions (H0)-(H3) and (D1)-(D3), 
\be\label{GIest}
\sup_{y}\|G^I(\cdot, t,;y) \|_{L^p(x)},
\;
\sup_{y}\|\partial_{x,y} G^I(\cdot, t,;y) \|_{L^p(x)}
 \le  C (1+t)^{-\frac{d}{2}(1-\frac{1}{p}}
\ee
for all $2 \le p \le \infty$, $t \ge 0 $, where $C>0$ is independent of $p$.
\end{proposition}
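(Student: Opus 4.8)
The plan is to estimate the oscillatory integral representation \eqref{Gxi2} of $G^I$ by treating separately the contributions of the short-time ($0\le t\le 1$) and long-time ($t\ge 1$) regimes, and, within the long-time regime, by exploiting the structure of the $n+1$ characteristic surfaces $\lambda_j(\xi)=-ia_j(\xi)+o(|\xi|)$ with $\Re\lambda_j(\xi)\le -\theta|\xi|^2$ from Corollary \ref{c:surfaces}. First I would dispense with $0\le t\le 1$: on this set $\phi$ restricts $\xi$ to a fixed compact set, $e^{\lambda_j(\xi)t}$ is uniformly bounded, and $q_j(\xi,x_1)\tilde q_j(\xi,y_1)^*$ is a bounded family of $L^2[0,X]\times L^2[0,X]$ kernels depending smoothly on $(\hat\xi,|\xi|)$, so the $\xi$-integral over the compact support of $\phi$ converges absolutely and gives a bound uniform in $y$ in any $L^p(x)$, $2\le p\le\infty$; the same applies after applying at most one $x$- or $y$-derivative, since differentiating in $x_1$ or $y_1$ lands on the smooth eigenfunctions and differentiating in $x$ or $y$ brings down a factor $i\xi$ (again bounded on $\mathrm{supp}\,\phi$) together with possibly $\partial_{x_1}$ acting on the periodic extension. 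So the content is the bound $(1+t)^{-\frac d2(1-1/p)}$ for $t\ge 1$.

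For $t\ge 1$ I would pass to polar coordinates $\xi=r\hat\xi$, $r=|\xi|\ge 0$, $\hat\xi\in S^{d-1}$, writing
\be
G^I(x,t;y)=\Big(\tfrac{1}{2\pi}\Big)^d\int_{S^{d-1}}\int_0^\infty
e^{ir\hat\xi\cdot(x-y)}\phi(r\hat\xi)\sum_{j=1}^{n+1}
e^{\lambda_j(r\hat\xi)t}q_j(r\hat\xi,x_1)\tilde q_j(r\hat\xi,y_1)^*\,
r^{d-1}\,dr\,d\hat\xi.
\ee
The key point is that, for each fixed $\hat\xi$, $\lambda_j(r\hat\xi)=-ira_j(\hat\xi)+O(r^2)$ is a \emph{smooth} function of $r$ with $\Re\lambda_j\le-\theta r^2$, and $q_j$, $\tilde q_j$ are smooth in $r$ and in $\hat\xi$; so this is exactly the one-dimensional (in $r$) convection--diffusion scalar integral of \cite{HoZ1}, integrated over the angular variable $\hat\xi\in S^{d-1}$. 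Following the standard template I would factor $e^{\lambda_j(r\hat\xi)t}=e^{-ira_j(\hat\xi)t}e^{-\theta_j(r,\hat\xi)r^2t}$ with $\Re\theta_j\ge\theta>0$, estimate the $L^2(x_1)$-norm (in the $x_1$ variable) by the isometry \eqref{iso}, and apply the Hausdorff--Young inequality $\|g\|_{L^\infty(\tilde x)}\le\|\hat g\|_{L^1(\tilde\xi)}$ in the $d-1$ transverse variables exactly as in the proof of Proposition \ref{p:hf}; this reduces matters to bounding a scalar kernel
\be
\mathcal K(x_1-y_1\text{-shift},\ x\cdot\hat\xi,\ t)
=\int_0^\infty e^{ir(\hat\xi\cdot(x-y)-a_j(\hat\xi)t)}e^{-\theta r^2 t}\,\beta_j(r,\hat\xi,x_1,y_1)\,r^{d-1}\,dr,
\ee
with $\beta_j$ smooth and compactly supported in $r$. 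Integrating by parts in $r$ finitely many times (each integration by parts gains a factor $(1+|\hat\xi\cdot(x-y)-a_j(\hat\xi)t|)^{-1}$ while the Gaussian weight $e^{-\theta r^2t}$ keeps the resulting $r$-integral of size $O(t^{-d/2})$ near $r=0$ and exponentially small away from it) yields the pointwise bound $|\mathcal K|\le Ct^{-d/2}(1+t^{-1/2}|\hat\xi\cdot(x-y)-a_j(\hat\xi)t|)^{-N}$ for any $N$. Taking $L^2(x_1)$ norms, then $L^1(\tilde\xi)$ via Hausdorff--Young in $\tilde x$, and finally integrating the resulting Gaussian-like bound against $d\hat\xi$ over $S^{d-1}$ and the spatial variables gives the claimed $t^{-\frac d2(1-\frac1p)}$ first for $p=2$ and $p=\infty$ and then by interpolation for all $2\le p\le\infty$, uniformly in $y$ and with constant independent of $p$. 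The derivative bounds are identical: $\partial_{x_1},\partial_{y_1}$ hit the smooth $q_j,\tilde q_j$; $\partial_x,\partial_y$ bring down $ir\hat\xi$, which is absorbed into $\beta_j$ (on $\mathrm{supp}\,\phi$, $r$ is bounded), changing no exponent since the decay here is $(1+t)^{-d/2(1-1/p)}$ rather than the sharper wave-like rate. Combining the $t\le1$ and $t\ge1$ estimates gives \eqref{GIest}.

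The main obstacle is the conical singularity of $\lambda_j,q_j,\tilde q_j$ at $\xi=0$: these are smooth in $(r,\hat\xi)=(|\xi|,\hat\xi)$ but \emph{not} in $\xi$, so one cannot integrate by parts directly in Cartesian $\xi$ and must commit to polar coordinates throughout — this is precisely the new feature flagged in the introduction, replacing the smooth spectral perturbation theory of \cite{S1,S2,S3}. One must check that the angular integration over $S^{d-1}$ does not destroy the rate: the factor $r^{d-1}$ from the Jacobian is exactly what converts the scalar one-dimensional $O(t^{-1/2})$-type decay of each radial integral into the $d$-dimensional $O(t^{-d/2})$ decay, and the $o(|\xi|)$ corrections to $\lambda_j$ in \eqref{e:surfaces}, being of the same smoothness class in $(r,\hat\xi)$ by Corollary \ref{c:surfaces}, do not affect the integration-by-parts argument beyond harmless lower-order terms. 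A secondary point to watch is that the stationary-phase/nonstationary-phase analysis is only carried out to the crude pointwise level sufficient for the nonsharp $L^p$-rate $(1+t)^{-\frac d2(1-\frac1p)}$ — the sharper convection--diffusion description (Gaussian centered on $x\cdot\nu=-a_j(\hat\xi)t$) is not needed here, which considerably simplifies the angular integration since one only needs integrability, not the precise profile.
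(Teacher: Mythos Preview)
Your proposal overcomplicates the argument substantially, and in the process introduces an incoherence. The paper's proof is elementary and short: for $p=\infty$, bound the integrand in \eqref{Gxi2} directly using $|e^{\lambda_j(\xi)t}|\le e^{-\theta|\xi|^2 t}$ and the uniform boundedness of $q_j,\tilde q_j$, giving
\[
\|G^I\|_{L^\infty(x,y)}\le C\|\phi(\xi)e^{-\theta|\xi|^2 t}\|_{L^1(\xi)}\le C(1+t)^{-d/2}.
\]
For $p=2$, the key observation is that \eqref{Gxi2} is \emph{itself} a Bloch--Fourier decomposition in the variable $z:=x-y$ (with $y$ appearing only as a parameter, $q_j(\xi,z_1+y_1)$ being periodic in $z_1$), so the isometry \eqref{iso} applied in $z$ gives
\[
\|G^I(\cdot,t;y)\|_{L^2(x)}\le C\|\phi(\xi)e^{-\theta|\xi|^2 t}\|_{L^2(\xi)}\,\|q_j\|_{L^2[0,X]}\,\|\tilde q_j\|_{L^\infty[0,X]}\le C(1+t)^{-d/4}.
\]
Interpolate for $2\le p\le\infty$. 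Derivative bounds follow for the reasons you state. No polar coordinates, no integration by parts, no stationary phase.

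Your route mixes two incompatible coordinatizations of $\xi$: the polar parametrization $(r,\hat\xi)$ in which you write the radial integral, and the Cartesian splitting $(\xi_1,\tilde\xi)$ underlying both the isometry \eqref{iso} and the Hausdorff--Young step you borrow from Proposition~\ref{p:hf}. You cannot apply \eqref{iso} ``in $x_1$'' to the radial integral $\int_0^\infty\cdots r^{d-1}\,dr$ and then still integrate over $\hat\xi\in S^{d-1}$; the Bloch variable is $\xi_1$, not $r$. Moreover, your pointwise bound $|\mathcal K|\le Ct^{-d/2}(1+t^{-1/2}|\hat\xi\cdot(x-y)-a_j(\hat\xi)t|)^{-N}$ for the inner integral at fixed $\hat\xi$ decays only in the single direction $\hat\xi$, so $\mathcal K(\cdot,\hat\xi)\notin L^2(\R^d_x)$ and Minkowski over $S^{d-1}$ does not close; obtaining the $L^2$ rate this way would require a separate angular stationary-phase argument that you have not supplied.

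The conical singularity you flag is genuine but irrelevant to \emph{this} proposition: the paper's argument never differentiates in $\xi$, needing only boundedness of $q_j,\tilde q_j$ and the Gaussian upper bound on $|e^{\lambda_j t}|$. The Remark immediately following the paper's proof makes precisely this point, contrasting the isometry argument with H\"ormander-type pseudodifferential $L^2$ bounds, which \emph{would} fail at the critical blowup rate $|\xi|^{-1}$ present here. The escape is the periodic structure of $q_j,\tilde q_j$ and the Bloch isometry, not polar coordinates.
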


\begin{proof}
From representation (\ref{Gxi})(ii) and $\Re \lambda_j(\xi)\le -\theta |\xi|^2$,
we obtain by the triangle inequality
\be
\|G^I\|_{L^\infty(x,y)}\le C\|e^{-\theta |\xi|^2 t} \phi(\xi)\|_{L^1(\xi)}
 \le  C (1+t)^{-\frac{d}{2}},
\ee
verifying the bounds for $p=\infty$.  Derivative bounds follow similarly,
since derivatives falling on $q_j$ or $\tilde q_j$ are harmless, whereas
derivatives falling on $e^{i\xi\cdot(x-y)}$ bring down a factor
of $\xi$, again harmless because of the cutoff function $\phi$.

To obtain bounds for $p=2$, we note that (\ref{Gxi}(ii) may be viewed
itself as a Bloch--Fourier decomposition with respect to variable
$z:=x-y$, with $y$ appearing as a parameter.
Recalling (\ref{iso}), we may thus estimate
\ba
\sup_y \|G^I(x,t;y)\|_{L^2(x)}&=&
\sum_j \sup_y \|\phi(\xi) e^{-\lambda_j(\xi)t}
q_j(\cdot, z_1)\tilde q_j^*(\cdot, y_1)\|_{L^2(\xi; L^2(z_1\in [0,X]))}\\
&\le&
C\sum_j \sup_y \|\phi(\xi) e^{-\theta |\xi|^2t} \|_{L^2(\xi)}
\|q_j\|_{L^2(0,X)} \|\tilde q_j\|_{L^\infty(0,X)}
\\
&\le&
 C (1+t)^{-\frac{d}{4}},
\ea
where we have used in a crucial way the boundedness of $\tilde q_j$;
derivative bounds follow similarly.

Finally, bounds for $2\le p\le \infty$ follow by $L^p$-interpolation.
\end{proof}

\begin{remark}
\textup{
In obtaining the key $L^2$-estimate, we have used in an essential
way the periodic structure of $q_j$, $\tilde q_j$.  For, viewing
$G^I$ as a general pseudodifferential expression rather than
a Bloch--Fourier decomposition, we find that the smoothness of
$q_j$, $\tilde q_j$ is not sufficient to apply standard $L^2\to L^2$
bounds of H\"ormander, which require blowup in $\xi$ derivatives
at {\it less than} the critical rate $|\xi|^{-1}$ found here; see,
e.g., \cite{H} for further discussion.
}
\end{remark}

\begin{remark}\label{greenformula}
\textup{
Computation (\ref{comp1}) applied to the full solution formula
(\ref{IBFT}) yields the fundamental relation
\be\label{greenform}
G(x,t;y)=
\Big(\frac{1}{2\pi }\Big)^d \int_{-\pi}^{\pi}\int_{\R^{d-1}}
e^{i\xi \cdot (x-y)}[G_\xi(x_1,t;y_1)]d\xi
\ee
which, provided $\sigma(L_\xi)$ is semisimple, yields the simple
formula
$$
G(x,t;y)=
\Big(\frac{1}{2\pi }\Big)^d \int_{-\pi}^{\pi}\int_{\R^{d-1}}
e^{i\xi \cdot (x-y)}\sum_j e^{\lambda_j(\xi)t}q_j(\xi,x_1)
\tilde q_j(\xi, y_1)^* d\xi
$$
resembling that of the constant-coefficient case,
where $\lambda_j$ runs through the spectrum of $L_\xi$.
Relation (\ref{greenform}) underlies both the present analysis
and the technically rather different approach of \cite{OZ2},
with the basic idea in both cases being to separate off the
principal part of the series involving small $\lambda_j(\xi)$
and estimate the remainder as a faster-decaying residual.
}
\end{remark}

\begin{corollary}\label{Sbd}
Under assumptions (H0)--(H3) and (D1)--(D2),
for all $p\ge 2$, $t\ge 0$,
\ba\label{SIest}
\|S^I(t)f\|_{L^p},\;
\|\partial_x S^I(t)f\|_{L^p}, \;
\|S^I(t) \partial_x f\|_{L^p}
 &\le&  C (1+t)^{-\frac{d}{2}(1-\frac{1}{p})}\|f\|_{L^1}.
\ea
\end{corollary}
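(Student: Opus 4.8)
The plan is to reduce the desired $L^1 \to L^p$ bounds on $S^I(t)$ to the already-established uniform kernel bounds of Proposition \ref{Gbds}. Indeed, since $S^I(t)f(x) = \int_{\R^d} G^I(x,t;y) f(y)\, dy$, a direct application of the integral Minkowski inequality (Minkowski's inequality for integrals) gives
\be\nonumber
\|S^I(t)f\|_{L^p(x)} \le \int_{\R^d} \|G^I(\cdot,t;y)\|_{L^p(x)} |f(y)|\, dy
\le \Big(\sup_y \|G^I(\cdot,t;y)\|_{L^p(x)}\Big)\|f\|_{L^1(y)},
\ee
which is exactly the claimed bound with the constant supplied by Proposition \ref{Gbds}. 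The same argument applied to $\partial_x G^I(x,t;y)$ handles $\|\partial_x S^I(t)f\|_{L^p}$. For the term $\|S^I(t)\partial_x f\|_{L^p}$, I would integrate by parts in $y$, moving the derivative off $f$ and onto the kernel: $S^I(t)\partial_y f(x) = -\int_{\R^d} (\partial_y G^I(x,t;y)) f(y)\, dy$ (boundary terms vanish by the decay/periodic-times-Gaussian structure of $G^I$, or simply because we may take $f$ Schwartz and then pass to the limit), after which the same Minkowski estimate applies with the $\partial_y G^I$ bound from Proposition \ref{Gbds}.

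Two small points deserve attention. First, one must be slightly careful about the distinction between $\partial_x$ acting in the full variable $x = (x_1,\tilde x)$ versus the periodic representation: derivatives in $\tilde x_j$ ($j \ge 2$) land on the oscillatory factor $e^{i\xi\cdot(x-y)}$ and bring down a factor $\xi_j$, harmless by the cutoff $\phi$, while $\partial_{x_1}$ may also hit $q_j(\xi,x_1)$, which is harmless by smoothness and periodicity in $x_1$ — this is precisely the remark already made in the proof of Proposition \ref{Gbds}, so the $\partial_{x,y} G^I$ bounds stated there cover exactly what is needed. Second, the constant $C$ is independent of $p$ because it is inherited directly from Proposition \ref{Gbds}, where $p$-independence was already asserted.

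The only genuine subtlety — and the step I would flag as the main obstacle, though it is really bookkeeping rather than a deep difficulty — is justifying the interchange of integration in the Minkowski step and the integration by parts for the $S^I(t)\partial_x f$ term for general $f \in L^1$. The clean way is to prove the estimate first for $f$ in a dense class (say $C_0^\infty$ or Schwartz), where $S^I(t)f$ is manifestly smooth and the manipulations are legitimate, obtain the bound with the stated $(1+t)$-decay constant, and then extend to all of $L^1$ by density since $S^I(t)$ is built from a bounded (indeed $L^1\to L^\infty$ bounded, via Proposition \ref{Gbds}) kernel. For the derivative-on-$f$ version, one similarly notes $S^I(t)\partial_x f$ should be interpreted as the bounded operator with kernel $-\partial_y G^I$, which agrees with $S^I(t)\circ\partial_x$ on the dense class; the estimate then defines its extension. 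Everything else — the $L^p$ interpolation mentioned for intermediate $p$ is in fact unnecessary here since Proposition \ref{Gbds} already supplies all $p \in [2,\infty]$ directly — is immediate.
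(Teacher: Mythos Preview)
Your proof is correct and follows exactly the paper's approach: the paper's argument is the one-line ``immediate, from (\ref{GIest}) and the triangle inequality,'' displaying precisely the Minkowski-for-integrals estimate you wrote. Your additional care with the derivative cases (integration by parts in $y$, density in $L^1$) simply spells out what the paper leaves implicit.
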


\begin{proof}
Immediate, from (\ref{GIest}) and the triangle inequality,
as, for example,
$$
\|S^I(t)f(\cdot )\|_{L^p}=
\Big\|\int_{\R^d}G^I(x,t;y)f(y)dy\Big\|_{L^p(x)}
\le
\int_{\R^d}\sup_y \|G^I(\cdot ,t;y)\|_{L^p}|f(y)|dy.
$$
\end{proof}

{\bf Additional estimates.}
For general interest, we state also some easy-to-obtain generalizations,
even though they are not needed in the analysis.
From boundedness of the spectral projections $P_j(\xi)=
q_j \langle \tilde q_j, \cdot\rangle$ in $L^2[0,X]$ and their derivatives, 
another consequence of first-order splitting of eigenvalues
$\lambda_j(\xi)$ at the origin, we obtain 
boundedness of $\phi(\xi) P(\xi)e^{L_\xi t}$
and thus, by (\ref{iso}), the global bounds
\be\label{triv}
\|S^I(t)f\|_{L^2(x)},\;
\|\partial_x S^I(t)f\|_{L^2(x)},\;
\|S^I(t)\partial_x f\|_{L^2(x)}
\le  C\|f\|_{L^2(x)}
\quad \hbox{\rm for all $t\ge 0$}.
\ee
By Riesz--Thorin interpolation between (\ref{triv}) and (\ref{SIest}),
we obtain the following, apparently sharp bounds between various
$L^q$ and $L^p$.

\begin{corollary}\label{RT}
Assuming (H0)--(H3) and (D1)--(D2),
for all $1\le q\le 2$, $p\ge 2$, $t\ge 0$,
\ba\label{RTest}
\|S^I(t)f\|_{L^p},\;
\|\partial_x S^I(t)f\|_{L^p}, \;
\|S^I(t) \partial_x f\|_{L^p}
 &\le&  C (1+t)^{-\frac{d}{2}(\frac{1}{q}-\frac{1}{p})}\|f\|_{L^q}.
\ea
\end{corollary}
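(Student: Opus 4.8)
The plan is to obtain Corollary~\ref{RT} by (iterated) Riesz--Thorin interpolation from three families of endpoint bounds, all of which hold equally for $S^I(t)$, $\partial_x S^I(t)$, and $S^I(t)\partial_x$. It is convenient to record each bound as a point or segment in the $(1/q,1/p)$--plane carrying the ``rate'' $C(1+t)^{E}$, where $E(q,p):=-\frac{d}{2}\left(\frac1q-\frac1p\right)$. The three families are: (i) the no--decay bound (\ref{triv}), i.e.\ $L^2\to L^2$ with rate $C(1+t)^{0}=C(1+t)^{E(2,2)}$, occupying the single point $(1/q,1/p)=(1/2,1/2)$; (ii) the bound (\ref{SIest}) of Corollary~\ref{Sbd}, i.e.\ $L^1\to L^p$ for every $p\in[2,\infty]$ with rate $C(1+t)^{-\frac d2(1-\frac1p)}=C(1+t)^{E(1,p)}$, occupying the segment $\{1\}\times[0,1/2]$; and (iii) the dual family $L^q\to L^\infty$ for every $q\in[1,2]$ with rate $C(1+t)^{-\frac d2\cdot\frac1q}=C(1+t)^{E(q,\infty)}$, occupying the segment $[1/2,1]\times\{0\}$.

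Family (iii) is not literally stated above, but it is obtained by exactly the argument of Proposition~\ref{Gbds}. Indeed, by Proposition~\ref{kernels} both $q_j(\xi,\cdot)$ and $\tilde q_j(\xi,\cdot)$ are smooth periodic — hence uniformly bounded — functions, so running the $L^2$ computation of Proposition~\ref{Gbds} with the roles of $x_1$ and $y_1$ interchanged (equivalently, applying Corollary~\ref{Sbd} to the adjoint $S^I(t)^*$, whose Green kernel $G^I(y,t;x)^*$ has the same structure with $\lambda_j$ replaced by $\bar\lambda_j$, still satisfying $\Re\bar\lambda_j(\xi)\le-\theta|\xi|^2$), followed by the same $L^p$--interpolation step, yields $\sup_x\|G^I(x,t;\cdot)\|_{L^{q'}(y)}\le C(1+t)^{-\frac d2(1-\frac1{q'})}$ for $q'\ge2$, i.e.\ $1\le q\le2$; by H\"older's inequality in $y$ this gives $\|S^I(t)f\|_{L^\infty(x)}\le C(1+t)^{-\frac d2\cdot\frac1q}\|f\|_{L^q}$, and likewise for the $x$-- and $y$--derivatives via the derivative part of Proposition~\ref{Gbds}.

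To finish I would use that $E(q,p)$ is an \emph{affine} function of $(1/q,1/p)$. Hence each Riesz--Thorin interpolation between two of the above bounds preserves the shape of the rate, since $\bigl(C(1+t)^{E_0}\bigr)^{1-\theta}\bigl(C(1+t)^{E_1}\bigr)^{\theta}=C(1+t)^{(1-\theta)E_0+\theta E_1}=C(1+t)^{E(q_\theta,p_\theta)}$, where $(1/q_\theta,1/p_\theta)$ is the corresponding convex combination. It then remains only to check that the convex hull of the endpoint regions (i)--(iii) — the convex hull of the triangle with vertices $(1/2,1/2),(1,1/2),(1,0)$ together with the segment from $(1/2,0)$ to $(1,0)$ — is the full rectangle $[1/2,1]\times[0,1/2]=\{(1/q,1/p):1\le q\le2,\ p\ge2\}$, which is immediate, and that every point of this rectangle is reached by at most two successive interpolations (Carath\'eodory in $\R^2$); the derivative statements are identical. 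The step I would flag as the one piece of real content, as opposed to pure bookkeeping, is precisely the need for family (iii): interpolating (\ref{triv}) against (\ref{SIest}) alone fills only the sub--triangle $\{\frac1q+\frac1p\ge1\}$ of the rectangle, so one cannot reach, e.g., $L^2\to L^p$ with $p>2$ without the dual bounds — which, happily, cost nothing beyond transposing the proof of Proposition~\ref{Gbds}.
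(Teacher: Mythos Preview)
Your proof is correct and, in fact, more complete than the paper's own argument. The paper simply writes ``By Riesz--Thorin interpolation between (\ref{triv}) and (\ref{SIest})'' and leaves it there; as you correctly flag, that interpolation only reaches the sub-triangle $\{1/q+1/p\ge 1\}$ of the claimed rectangle $[1/2,1]\times[0,1/2]$ and so does not yield, e.g., the $L^2\to L^\infty$ bound. Your family (iii) --- the dual estimate $\sup_x\|G^I(x,t;\cdot)\|_{L^{q'}(y)}\le C(1+t)^{-\frac d2(1-\frac1{q'})}$ obtained by rerunning Proposition~\ref{Gbds} with the roles of $x_1$ and $y_1$ interchanged (legitimate since both $q_j$ and $\tilde q_j$ are smooth periodic, hence equally bounded in $L^2$ and $L^\infty$) --- is exactly what is needed to fill in the missing half. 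The affine dependence of the exponent $E(q,p)$ on $(1/q,1/p)$ makes the bookkeeping trivial, as you note. So: same strategy in spirit (interpolation from endpoint kernel bounds), but you have supplied the missing endpoint that the paper's one-line proof tacitly assumes.
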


\subsection{Short-time bounds}\label{short}

Finally, we recall the following short-time bounds
on the full solution operator $S(t)=e^{Lt}$, following from standard
semigroup theory \cite{Pa} for the second-order elliptic operator
$L$.

\begin{proposition}\label{p:Sbd}
Assuming (H0)--(H3) and (D1)--(D2),
for $1\le p\le \infty$, $0\le t\le 1$,
\ba\label{shortSest}
\|S(t)f\|_{L^p}
 &\le&  C \|f\|_{L^p},\\
\label{shortSderest}
\|\partial_x S(t)f\|_{L^p}, \; \|S(t) \partial_x f\|_{L^p}
 &\le&  C t^{-\frac{1}{2}}
\|f\|_{L^p}.
\ea
\end{proposition}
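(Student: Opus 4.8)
The plan is to derive both bounds from classical pointwise Gaussian estimates on the fundamental solution of the parabolic operator $L$ together with an elementary Schur-test argument; only (H0) and (H1) are actually needed. By (H1), the principal symbol $\sum_{jk} B^{jk}(\bu)\xi_j\xi_k$ has spectrum with real part $\ge \theta|\xi|^2$, so $L$ is parabolic in the Petrowsky sense, and by (H0) its coefficients are $C^k$, $k\ge[d/2]+1$, bounded together with their derivatives since they are periodic in $x_1$ and independent of $\tx$. Standard parabolic theory --- the Levi parametrix construction (Eidelman, Friedman), or equivalently the sectorial-operator/analytic-semigroup framework together with resolvent kernel bounds --- then shows that $e^{Lt}$ acts for $t>0$ as an integral operator with kernel $G(x,t;y)$, the fundamental solution of $\pd_t-L$, satisfying for $0<t\le 1$
\be
|G(x,t;y)|\le Ct^{-d/2}e^{-|x-y|^2/(Mt)},\qquad
|\pd_xG(x,t;y)|,\ |\pd_yG(x,t;y)|\le Ct^{-(d+1)/2}e^{-|x-y|^2/(Mt)}
\ee
for some $C,M>0$; the regularity assumed in (H0) is more than enough to close the parametrix iteration carrying the first-derivative bounds.

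Granting these, (\ref{shortSest}) will be immediate: the substitution $y\mapsto \sqrt{t}\,y$ makes $\sup_x\int_{\R^d}|G(x,t;y)|\,dy$ and $\sup_y\int_{\R^d}|G(x,t;y)|\,dx$ both bounded by $Ct^{-d/2}\int_{\R^d}e^{-|z|^2/(Mt)}\,dz=C'$, uniformly in $0<t\le1$, so Schur's test (i.e., Young's/Minkowski's inequality for kernel operators) yields $\|S(t)f\|_{L^p}\le C'\|f\|_{L^p}$ for all $1\le p\le\infty$; together with $S(0)=I$ and strong continuity this gives (\ref{shortSest}). For (\ref{shortSderest}) I would write $\pd_xS(t)f(x)=\int_{\R^d}\pd_xG(x,t;y)f(y)\,dy$ and, integrating by parts (boundary terms vanishing on $C_c^\infty$ and then extending by density), $S(t)\pd_xf(x)=-\int_{\R^d}\pd_yG(x,t;y)f(y)\,dy$; the same substitution bounds $\sup_x\int_{\R^d}|\pd_{x,y}G|\,dy$ and $\sup_y\int_{\R^d}|\pd_{x,y}G|\,dx$ by $Ct^{-(d+1)/2}\int_{\R^d}e^{-|z|^2/(Mt)}\,dz=C't^{-1/2}$, and Schur's test again gives $\|\pd_xS(t)f\|_{L^p},\ \|S(t)\pd_xf\|_{L^p}\le C't^{-1/2}\|f\|_{L^p}$.

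The only nontrivial input is the family of Gaussian kernel bounds, and this is where the real content sits: one must treat the system $L$ at the level of Petrowsky parabolicity rather than scalar ellipticity, and must have enough coefficient regularity --- supplied by (H0) --- to extract the first-derivative estimates from the parametrix. A route avoiding pointwise kernel bounds would be to obtain analyticity of $e^{Lt}$ on $L^p(\R^d)$, $1<p<\infty$, directly from sectoriality of $L$, deduce $\|\pd_xe^{Lt}\|_{L^p\to L^p}\le\|(\omega-L)^{1/2}e^{Lt}\|_{L^p\to L^p}\,\|(\omega-L)^{-1/2}\pd_x\|_{L^p\to L^p}\le Ct^{-1/2}$ for a suitable shift $\omega$ (the second factor bounded by Mikhlin multiplier theory), and then recover the endpoints $p=1,\infty$ by duality and interpolation --- but the $L^1$, $L^\infty$ bounds still reduce to the Gaussian kernel estimate, so there is no genuine shortcut. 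In either case the proposition is a routine consequence of standard parabolic/semigroup theory for $L$, which is why we merely recall it here.
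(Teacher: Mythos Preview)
Your argument is correct. The paper does not actually prove this proposition: it is stated without proof, prefaced only by the remark that the bounds ``follow from standard semigroup theory \cite{Pa} for the second-order elliptic operator $L$.'' Your proposal fills in precisely the content that the paper leaves to the reader, and does so along the most classical route: pointwise Gaussian bounds on the parabolic fundamental solution (Levi parametrix / Eidelman--Friedman), then Schur's test to convert these into $L^p\to L^p$ operator bounds for all $1\le p\le\infty$. This is entirely in the spirit of the citation to \cite{Pa} and is the standard way to get the endpoint cases $p=1,\infty$, which the abstract analytic-semigroup theory on $L^p$ does not directly reach. Your observation that only (H0)--(H1) are needed, and that the bounded-coefficient hypothesis is automatic from periodicity, is also correct and worth noting.
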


\subsection{Linearized stability in dimensions $d\ge1$}\label{linstability}

\begin{theorem}\label{t:linstab}
Assuming (H0)-(H3), spectral stability (D1)-(D2), implies
$L^1 \cap L^p \rightarrow L^p$ asymptotic stability of the linear equation
(\ref{e:cons}), for all $p \ge 2$ and dimensions $d \ge 1$, with 
\ba\label{Sbound}
\|S(t)u_0\|_{L^p} &\le& 
C (1+t)^{-\frac{d}{2}(1-\frac{1}{p})} \|u_0 \|_{L^1\cap L^p},\\
\label{derSbound}
\|S(t)\partial_{x}u_0\|_{L^p} &\le& 
C t^{-\frac{1}{2}} (1+t)^{\frac{1}{2}-\frac{d}{2}(1-\frac{1}{p})} \|u_0 \|_{L^1\cap L^p}
\quad \hbox{\rm for all $t \ge 0$.}
\ea
\end{theorem}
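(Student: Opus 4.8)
The plan is to split the time interval at $t=1$: for $0\le t\le1$ invoke the short-time bounds of Proposition \ref{p:Sbd} directly, and for $t\ge1$ use the low-/high-frequency decomposition $S(t)=S^I(t)+S^{II}(t)$ of (\ref{SI})--(\ref{SII}), bounding $S^I$ by Corollary \ref{Sbd} and $S^{II}$ by Proposition \ref{p:hf}. Having established the non-derivative bound (\ref{Sbound}) this way, I would then deduce the derivative bound (\ref{derSbound}) from it by the semigroup identity $e^{Lt}=e^{L(t-1)}e^{L}$, so that no separate derivative version of the frequency splitting is needed.

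For (\ref{Sbound}): on $0\le t\le1$, Proposition \ref{p:Sbd} gives $\|S(t)u_0\|_{L^p}\le C\|u_0\|_{L^p}\le C\|u_0\|_{L^1\cap L^p}$, and since the weight $(1+t)^{-\frac{d}{2}(1-\frac{1}{p})}$ is bounded below by a positive constant on $[0,1]$, this is the asserted inequality on that range. On $t\ge1$ write $S=S^I+S^{II}$. Corollary \ref{Sbd} controls the first piece directly, $\|S^I(t)u_0\|_{L^p}\le C(1+t)^{-\frac{d}{2}(1-\frac{1}{p})}\|u_0\|_{L^1}$. For the second piece, note first that $\|u_0\|_{L^2}\le C\|u_0\|_{L^1\cap L^p}$ for every $p\ge2$ by $L^p$-interpolation (this is exactly where $p\ge2$ is used); then estimate (\ref{SII3}) of Proposition \ref{p:hf} gives $\|S^{II}(t)u_0\|_{L^p}\le Ct^{-\frac{d}{2}(\frac{1}{2}-\frac{1}{p})}e^{-\theta t}\|u_0\|_{L^2}$, which for $t\ge1$ is $\le Ce^{-\theta t}\|u_0\|_{L^1\cap L^p}\le C(1+t)^{-\frac{d}{2}(1-\frac{1}{p})}\|u_0\|_{L^1\cap L^p}$ since exponential decay dominates any fixed power of $t$. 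Summing the two contributions gives (\ref{Sbound}) for all $t\ge0$.

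For (\ref{derSbound}): on $0\le t\le1$, Proposition \ref{p:Sbd} gives $\|S(t)\partial_x u_0\|_{L^p}\le Ct^{-\frac{1}{2}}\|u_0\|_{L^p}$, and $(1+t)^{\frac{1}{2}-\frac{d}{2}(1-\frac{1}{p})}$ is bounded below by a positive constant on $[0,1]$, settling this range. For $t\ge1$, write $S(t)\partial_x u_0=S(t-1)\big(S(1)\partial_x u_0\big)$. By Proposition \ref{p:Sbd} at $t=1$, applied with $p=1$ and with the given $p\ge2$, one has $\|S(1)\partial_x u_0\|_{L^1\cap L^p}\le C\|u_0\|_{L^1\cap L^p}$; applying the already-proved estimate (\ref{Sbound}) with elapsed time $t-1\ge0$ and datum $S(1)\partial_x u_0$ then yields $\|S(t)\partial_x u_0\|_{L^p}\le C(1+(t-1))^{-\frac{d}{2}(1-\frac{1}{p})}\|u_0\|_{L^1\cap L^p}=Ct^{-\frac{d}{2}(1-\frac{1}{p})}\|u_0\|_{L^1\cap L^p}$, which for $t\ge1$ is comparable to $Ct^{-\frac{1}{2}}(1+t)^{\frac{1}{2}-\frac{d}{2}(1-\frac{1}{p})}\|u_0\|_{L^1\cap L^p}$. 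Together with the short-time range this is (\ref{derSbound}).

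I do not expect a genuine obstacle, since the real content is already packed into Corollary \ref{Sbd}, Proposition \ref{p:hf}, and Proposition \ref{p:Sbd}; this theorem is essentially their bookkeeping consequence. The points deserving care are: (i) the interpolation inequality $\|u_0\|_{L^2}\le C\|u_0\|_{L^1\cap L^p}$, which is the source of the restriction $p\ge2$; (ii) the observation that the high-frequency terms carry a factor $e^{-\theta t}$ and are therefore harmless against the algebraic rates once $t\ge1$; and (iii) matching powers of $t$ and $(1+t)$ across $t=1$, which works because the algebraic weights are comparable to positive constants on $[0,1]$ while $t$ and $1+t$ are comparable for $t\ge1$. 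The one small trick is to use $e^{Lt}=e^{L(t-1)}e^{L}$ to absorb the single $x$-derivative into the parabolic smoothing at time $1$, reducing (\ref{derSbound}) to (\ref{Sbound}).
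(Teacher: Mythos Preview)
Your proposal is correct and follows essentially the same approach as the paper, which records the proof simply as ``Immediate, from (\ref{SIIest}), (\ref{SIest}), and (\ref{shortSest}).'' The only minor difference is your handling of (\ref{derSbound}): rather than invoking the derivative versions of the low- and high-frequency estimates directly (Corollary~\ref{Sbd} and (\ref{SII2})), you factor $S(t)=S(t-1)S(1)$ and absorb the derivative via the short-time smoothing at $t=1$, then apply the already-proved (\ref{Sbound}). This is a legitimate and slightly cleaner alternative, since Proposition~\ref{p:hf} states the high-frequency $L^2\to L^p$ bound (\ref{SII3}) only in the non-derivative case, whereas your route sidesteps the need for its derivative analogue.
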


\begin{proof}  
Immediate, from (\ref{SIIest}), (\ref{SIest}), and (\ref{shortSest}).
\end{proof}

\section{Nonlinear stability in dimensions $d\ge 3$}\label{stability}

Define now the perturbation variable 
$v:=u-\bar u$ for $u$ a solution of (\ref{e:cons}).

\begin{proposition}\label{damping}
Assuming (H0)-(H3), let $v_0\in H^k$, 
$k\ge [d/2]+1$ as in (H0), and suppose that
for $0\le t\le T$, the $H^k$ norm of $v$
remains bounded by a sufficiently small constant. 
There are then constants $\theta_{1,2}>0$ so that, for all $0\leq t\leq T$,
\begin{equation}\label{Ebounds}
\|v(t)\|_{H^k}^2 \leq C \rme^{-\theta_1 t} \|v(0)\|^2_{H^k} + C \int_0^t \rme^{-\theta_2(t-s)} |v|_{L^2}^2 (s)\,ds.
\end{equation}
\end{proposition}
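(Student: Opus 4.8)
The plan is to prove (\ref{Ebounds}) by a Friedrichs-type \emph{nonlinear damping} energy estimate. First I would record the equation for $v = u - \bar u$: subtracting the profile relation from (\ref{e:cons}) and Taylor-expanding the fluxes $f^j$ and viscosities $B^{jk}$ about $\bar u$, one finds
$$
v_t \;=\; Lv \;+\; \sum_j \partial_{x_j}\mathcal{N}^j(x, v, \nabla v),
\qquad
\mathcal{N}^j \;=\; g^j(x,v)\,\nabla v \;+\; h^j(x,v),
$$
where $L$ is the linearized operator (\ref{e:lin})--(\ref{e:coeff}) (the coefficient $A^j$ arising exactly as in (\ref{e:coeff}), only the $x_1$-derivative of $\bar u$ contributing since $\bar u = \bar u(x_1)$), and where $g^j, h^j$ are smooth and $X$-periodic in $x_1$ with $g^j(x,0) = 0$ and $h^j = \Or(|v|^2)$; thus $\mathcal{N}^j = \Or(|v|^2 + |v|\,|\nabla v|)$ and, crucially, $\mathcal{N}^j$ contains the genuinely second-order, quasilinear piece $g^j(x,v)\nabla v$. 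The bound (\ref{Ebounds}) will then follow by differentiating $\|v(t)\|_{H^k}^2$ in time.

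The heart of the argument is the $H^k$ energy estimate for the linear part. Hypothesis (H1) is precisely the statement that the second-order symbol $\sum_{jk}\xi_j\xi_k B^{jk}(\bar u(x_1))$ is uniformly strongly parabolic, so that $L$, with its smooth $X$-periodic coefficients, is uniformly elliptic and generates an analytic semigroup; by the standard energy/symmetrizer estimates for such operators I would obtain, for each multi-index $\alpha$ with $|\alpha| \le k$, on applying $\partial^\alpha$, pairing with $\partial^\alpha v$ in $L^2(\R^d)$, integrating by parts and commuting $\partial^\alpha$ through the periodic coefficients,
$$
\frac12 \frac{d}{dt}\|v\|_{H^k}^2 \;\le\; -\theta \|\nabla v\|_{H^k}^2 \;+\; C\|v\|_{H^k}^2 \;+\; \sum_{|\alpha| \le k,\; j}\bigl|\langle \partial^\alpha\mathcal{N}^j,\, \partial^\alpha v_{x_j}\rangle\bigr|,
$$
the term $C\|v\|_{H^k}^2$ collecting all commutators with $B^{jk}(\bar u), A^j$ together with the non-self-adjoint lower-order part of $L$; realizing the dissipation $-\theta\|\nabla v\|_{H^k}^2$ from the bare symbol condition (H1) may require working in an equivalent norm built from a symmetrizer, which is routine parabolic theory. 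For the nonlinear term, since $k \ge [d/2] + 1$ makes $H^k(\R^d)$ a Banach algebra continuously embedded in $L^\infty$, Moser product and composition estimates give $\|\partial^\alpha\mathcal{N}^j\|_{L^2} \le C(\|v\|_{L^\infty})\bigl(\|v\|_{L^\infty}\|\nabla v\|_{H^k} + \|\nabla v\|_{L^\infty}\|v\|_{H^k}\bigr)$; bounding $\|v\|_{L^\infty} \le C\|v\|_{H^k}$ and, by Gagliardo--Nirenberg, $\|\nabla v\|_{L^\infty} \le C\|\nabla v\|_{H^k}^{\vartheta}\|v\|_{L^2}^{1-\vartheta}$ with $\vartheta = (d+2)/(2(k+1)) < 1$, and using the assumed smallness of $\|v\|_{H^k}$ together with Young's inequality, every nonlinear contribution is dominated by $\varepsilon\|\nabla v\|_{H^k}^2 + C\|v\|_{H^k}^2$ with $\varepsilon$ as small as desired.

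It then remains to close the inequality and integrate. Collecting the above yields $\frac{d}{dt}\|v\|_{H^k}^2 \le -\theta\|\nabla v\|_{H^k}^2 + C_0\|v\|_{H^k}^2$ with $C_0$ a fixed constant; interpolating once more, $\|v\|_{H^k}^2 \le \varepsilon'\|\nabla v\|_{H^k}^2 + C_{\varepsilon'}\|v\|_{L^2}^2$ (Gagliardo--Nirenberg), choosing $\varepsilon'$ so that $C_0\varepsilon' \le \theta/2$, and using the elementary inequality $\|\nabla v\|_{H^k}^2 \ge \|v\|_{H^k}^2 - \|v\|_{L^2}^2$, one reaches $\frac{d}{dt}\|v\|_{H^k}^2 \le -\theta_1\|v\|_{H^k}^2 + C\|v\|_{L^2}^2$ for some $\theta_1 > 0$, whence (\ref{Ebounds}) follows by Gr\"onwall's inequality with $\theta_2 = \theta_1$. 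The step I expect to be most delicate is this last bookkeeping together with the nonlinear estimates: a genuine energy computation (rather than semigroup bounds) is forced by the quasilinear second-order term $g^j(x,v)\nabla v$, and every error it generates --- coefficient commutators, the non-symmetric lower-order part of $L$, and the nonlinear remainders --- must be reabsorbed into the \emph{single} dissipative quantity $-\theta\|\nabla v\|_{H^k}^2$ furnished by (H1), modulo only the ``free'' low-order norm $\|v\|_{L^2}$; this is what forces the repeated Gagliardo--Nirenberg interpolations and the use of a priori smallness, and is exactly why the threshold $k \ge [d/2]+1$ of (H0) is what is needed.
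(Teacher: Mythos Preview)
Your proposal is correct and follows essentially the same route as the paper: derive the perturbation equation, pair with derivatives of $v$ to obtain $\partial_t\|v\|_{H^k}^2 \le -\theta\|\nabla v\|_{H^k}^2 + C\|v\|_{H^k}^2$ (the paper invokes G\aa rding's inequality and, in the non-symmetric case, a pseudodifferential symmetrizer, just as you indicate), interpolate to trade the $H^k$ term on the right for $L^2$, and apply Gr\"onwall. Your treatment of the nonlinear remainder via Moser/Gagliardo--Nirenberg is more explicit than the paper's, which absorbs this step into the single line ``so long as $\|v\|_{H^k}$ remains bounded''; the only cosmetic difference is that the paper keeps the full quasilinear viscosity $B^{jk}(u)$ on the principal side of the equation rather than splitting off $B^{jk}(\bar u)$ as part of $L$.
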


\begin{proof}
Subtracting the equations for $u$ and $\bar u$, we may write the
nonlinear perturbation equation as
\ba\label{vperturteq}
v_t + \sum_j (df_j(\bar u)v)_{x_j} - \sum_{j,k}(B_{jk}(u)v_{x_j})_{x_k}
&=&\sum_{j,k}((B_{jk}(\bar u+ v)-B_{jk}(\bar u))\bar u_{x_j})_{x_k}\\
\nonumber
& &-\sum_{j}(f_j(\bar u+v)-f_j(\bar u)-df_j(\bar u)v)_{x_j}.
\ea
In the uniformly elliptic case
$$
\Re \sum_{jk}B_{jk}\nu_j\nu_k \ge \theta|\nu|^2, \, \theta>0
$$
we may take the $L^2$ inner product in $x$ of $\sum_{k=0}^k\partial_x^{2k}v$ 
against (\ref{vperturteq}), integrate by parts, apply 
G\"arding's inequality, and rearrange the resulting terms,
to arrive at the inequality
\[
\partial_t \|v\|_{H^k}^2(t) \leq -\theta \|\partial_x^{k+1} v\|_{L^2}^2 +
C \|v\|_{H^k}^2, 
\]
where $\theta>0$, for some sufficiently large $C>0$, so long as $\|v\|_{H^k}$ remains bounded. Using the Sobolev interpolation
$
\|v\|_{H^k}^2 \leq \tilde{C}^{-1} \|\partial_x^{k+1} v\|_{L^2}^2 + \tilde{C} \| v\|_{L^2}^2
$
for $\tilde{C}>0$ sufficiently large, we obtain 
$
\partial_t \|v\|_{H^k}^2(t) \leq -\tilde{\theta} \|v\|_{H^k}^2 + 
C \|v\|_{L^2}^2,
$
from which (\ref{Ebounds}) follows by Gronwall's inequality.
In the general case, we perform an analogous pseudodifferential
estimate using a frequency-dependent symmetrizer to obtain the
same result.
We omit the (standard) details.
\end{proof}

\begin{theorem}\label{t:nonlinstab}
Assume (H0)-(H3). Then, spectral stability, (D1)-(D3), implies
nonlinear asymptotic stability of $\bar u$ from $L^1 \cap H^k\to L^p$,
$k\ge [d/2]+1$ as in (H0) and $p\ge 2$, for dimensions $d \ge 2$, 
with
\ba\label{e:pmain}
\|u(\cdot,t)-\bar{u} \|_{L^p} &\le& 
C  (1+t)^{-\frac{d}{2}(1-\frac{1}{p})} \|u_0-\bar u\|_{L^1\cap H^k},\\
\label{e:Hmain}
\|u(\cdot,t)-\bar{u} \|_{H^k} &\le& 
C  (1+t)^{-\frac{d}{4}} \|u_0-\bar u\|_{L^1\cap H^k}
\quad
\hbox{\rm for all $t \ge 0$}.
\ea
\end{theorem}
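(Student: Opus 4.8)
The plan is to run a standard nonlinear iteration/continuity argument built on Duhamel's formula, using the linearized decay bounds of Theorem~\ref{t:linstab} (equivalently Corollary~\ref{Sbd}, Proposition~\ref{p:hf}, and Proposition~\ref{p:Sbd}) together with the nonlinear energy estimate of Proposition~\ref{damping}. Write the perturbation equation~(\ref{vperturteq}) in the schematic form $v_t-Lv=\sum_j\partial_{x_j}N_j(v)$, where $N_j(v)=\rmO(|v|^2+|v||\partial_x v|)$ is at least quadratic, and apply Duhamel:
\be\label{duh}
v(t)=S(t)v_0+\int_0^t S(t-s)\sum_j\partial_{x_j}N_j(v(s))\,ds.
\ee
The key point is that each nonlinear term carries a spatial derivative, so I may move it onto the semigroup and use the derivative bound~(\ref{derSbound}); and since $N_j$ is quadratic, if $v$ decays like an $L^1\cap H^k\to L^p$ solution then $N_j(v)$ decays in $L^1$ like $\|v\|_{L^2}^2$, gaining the extra integrability needed to close the estimate.

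The steps, in order. First, define the iteration norm
$$
\zeta(t):=\sup_{0\le s\le t}\Big((1+s)^{\frac{d}{2}(1-\frac1p)}\|v(s)\|_{L^p}+(1+s)^{\frac{d}{4}}\|v(s)\|_{H^k}\Big),
$$
(with the $L^p$ factor replaced by the slowest-decaying relevant exponent), and assume a priori that $\zeta(t)\le C_0\eps_0$ with $\eps_0:=\|u_0-\bar u\|_{L^1\cap H^k}$ small; local existence plus Proposition~\ref{damping} gives a short-time solution and continuation in $H^k$. Second, estimate the $L^p$ component: split $S(t-s)=S^I(t-s)+S^{II}(t-s)$, bound the linear term $S(t)v_0$ by~(\ref{Sbound}), and for the Duhamel integral bound $\|S^I(t-s)\partial_x N(v(s))\|_{L^p}\le C(1+t-s)^{-\frac d2(1-\frac1p)}\|N(v(s))\|_{L^1}$ from~(\ref{SIest}) while for $0\le t-s\le 1$ use~(\ref{shortSderest}) with a $t^{-1/2}$ singularity, and bound the high-frequency piece using~(\ref{SIIest})--(\ref{SII3}) (exponentially decaying, so $N(v(s))$ in $L^2$ suffices there), with $\|N(v(s))\|_{L^1}\lesssim\|v(s)\|_{L^2}\|v(s)\|_{H^k}\lesssim\eps_0^2(1+s)^{-\frac d4-\frac d4}$ controlled by $\zeta$. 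The resulting time convolution integral
$$
\int_0^t(1+t-s)^{-\frac d2(1-\frac1p)}(t-s)^{-\frac12}(1+s)^{-\frac d2}\,ds
$$
is, for $d\ge 3$, dominated by $C(1+t)^{-\frac d2(1-\frac1p)}$ (this is where $d\ge3$ enters: the source term decays at rate $(1+s)^{-d/2}$, which is integrable precisely when $d>2$, so the convolution is governed by the linear rate). Third, estimate the $H^k$ component by feeding the just-obtained $L^2$ decay $\|v(s)\|_{L^2}\lesssim\eps_0(1+s)^{-d/4}$ into the energy inequality~(\ref{Ebounds}), giving $\|v(t)\|_{H^k}^2\lesssim\eps_0^2 e^{-\theta_1 t}+\eps_0^2\int_0^t e^{-\theta_2(t-s)}(1+s)^{-d/2}ds\lesssim\eps_0^2(1+t)^{-d/2}$. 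Fourth, combine to get $\zeta(t)\le C_1\eps_0+C_2\zeta(t)^2$; for $\eps_0$ small this forces $\zeta(t)\le 2C_1\eps_0$ on the maximal existence interval, which by the $H^k$ bound is all of $t\ge0$, yielding~(\ref{e:pmain})--(\ref{e:Hmain}).

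The main obstacle is making the $L^1$ estimate of the nonlinear source term legitimately interact with the linearized bounds: Corollary~\ref{Sbd} controls $S^I(t)\partial_x f$ in terms of $\|f\|_{L^1}$, but one must check that $N_j(v)$, which contains a factor $\partial_x v$, is genuinely in $L^1$ with the claimed quadratic decay — this needs $\partial_x v\in L^2$ paired with $v\in L^2$ via Cauchy--Schwarz, hence the $H^k$ control from Proposition~\ref{damping} is essential and the two estimates must be closed simultaneously rather than sequentially. A secondary subtlety is the boundary term $\sum_{j,k}(B_{jk}(u)v_{x_j})_{x_k}$ on the left side of~(\ref{vperturteq}), whose coefficient depends on $u=\bar u+v$ rather than $\bar u$; this variable-coefficient correction is itself quadratic-type ($\|(B(u)-B(\bar u))\partial_x v\|$) and can be absorbed into $N$, but one should verify it does not spoil the energy estimate, which is already accounted for in Proposition~\ref{damping} since that estimate was stated for the full quasilinear equation. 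I also note a mild discrepancy to resolve: the theorem is stated for $d\ge2$ in~(\ref{e:pmain}) but the convolution argument as sketched closes cleanly only for $d\ge3$ (consistent with the abstract and introduction), so the write-up should either restrict to $d\ge3$ or supply the borderline $d=2$ refinement separately.
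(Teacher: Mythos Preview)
Your proposal is correct and follows essentially the same route as the paper: Duhamel representation~(\ref{e:v}), quadratic structure $Q^j=\rmO(|v|^2+|v||\nabla v|)$, the linearized bounds~(\ref{Sbound})--(\ref{derSbound}), control of $\|Q^j\|_{L^1}$ via $\|v\|_{L^2}\|v\|_{H^k}$ together with Proposition~\ref{damping}, and a continuous-induction inequality $\zeta\le C(\eta_0+\zeta^2)$, with the closing step requiring integrability of $(1+s)^{-d/2}$ and hence $d\ge 3$. The only organizational difference is that the paper defines its iteration quantity $\eta(t)$ in terms of $\|v\|_{L^2}$ alone and invokes Proposition~\ref{damping} on the fly to recover $\|v\|_{H^k}\lesssim (\eta_0+\eta)(1+t)^{-d/4}$ (and thereby $\|v\|_{L^\infty}$ and $\|\nabla v\|_{L^2}$), then treats general $L^p$ in a second pass, whereas you build $H^k$ directly into $\zeta$; both close in the same way. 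Your observation about the $d\ge 2$ versus $d\ge 3$ discrepancy is also in agreement with the paper, which explicitly notes after the proof that the argument just fails at $d=2$ due to a logarithmic divergence.
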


\begin{proof}
Taylor expanding about $\bar u$, we obtain the alternative
perturbation equation
\be
v_t-Lv=\sum_j Q^j(v, \grad v)_{x_j},
\ee
where
\ba	\label{e:q}
Q^j(v, \grad v)&=& \Or(|v|^2+|v||\grad v|)
\ea
as long as $|v|$ remains bounded by some fixed constant. 
By Duhamel's principle, we have
\be \label{e:v}
v(\cdot ,t)=
S(t)v_0 +\int_0^t S(t-s) \sum_j \partial_{y_j}Q^j(\cdot,s)ds.
\ee

{\bf Case $L^2\cap H^k$.}
Define now
\be \label{e:eta}
\eta(t):=\hbox{sup}_{0 \le s \le t} \|v(\cdot, s) \|_{L^p}
(1+s)^{\frac{d}{4}}.
\ee
By Proposition \ref{damping}, $\|v\|_{H^{2}}$, hence $\|v_t\|_{L^2}$,
remains small so long as $\eta$ remains small, hence $\eta$ remains
continuous so long as it remains small.
We first establish
\be \label{e:claim}
\eta(t) \le C(\eta_0+\eta(t)^2),
\quad
\eta_0:=\|v_0\|_{L^1\cap H^k},
\ee
from which it follows by continuous induction that 
$\eta(t) \le 2C \eta_0$ for $t \ge0$, if $\eta_0 < 1/ 4C$, 
yielding by (\ref{e:eta}) the result (\ref{e:pmain}) for $p=2$. 
This in turn yields (\ref{e:Hmain}) by Proposition \ref{damping}.

By Proposition \ref{damping}, (\ref{e:eta}), and Sobolev embedding,
\ba
|Q^j(v, \grad v)(\cdot,t)|_{L^1\cap L^p} \le
(|\grad v|_{L_{2}} + |v|_{L^{2}} )|v|_{L^2\cap L^\infty} 
		&\le& C \eta(t)^2 (1+t)^{-\frac{d}{2}}
\ea
for all $2\le p\le \infty$, in particular $p=2$.
Substituting into (\ref{e:v}) and using (\ref{Sbound}),
(\ref{derSbound}), we thus obtain
\ba\label{est2}
|v(\cdot, t)|_{L^2} &\le& C\eta_0 (1+t)^{-\frac{d}{4}}	\\	\nn
			   &+& C \eta(t)^2 \int_0^t 
(1+t-s)^{\frac{1}{2}-\frac{d}{4}} (t-s)^{-\frac{1}{2}}
(1+s)^{-\frac{d}{2}}	\\  \nn
			  &\le&C(\eta_0+\eta(t)^2)
(1+t)^{\max\{-\frac{d}{4}, 1-\frac{3d}{4}\}}
\ea
so long as $(1+s)^{-\frac{d}{2}}$ is integrable, i.e.,
for $d\ge 3$.
Noting that $1-\frac{3d}{4}\le -\frac{d}{4}$ for $d\ge 2$, 
we obtain (\ref{e:claim}) as claimed.
This completes the proof of (\ref{e:pmain})--(\ref{e:Hmain}) for $p=2$.

{\bf Case $L^p$, $2\le p\le \infty$.}
Substituting again into (\ref{e:v}) and using (\ref{Sbound}),
(\ref{derSbound}), we obtain
\ba\label{estp}
|v(\cdot, t)|_{L^2} &\le& C\eta_0 (1+t)^{-\frac{d}{2}(1-\frac{1}{p})}	\\	\nn
			   &+& C \eta(t)^2 \int_0^t 
(1+t-s)^{\frac{1}{2}-\frac{d}{2}
} (t-s)^{-\frac{1}{2}(1-\frac{1}{p})}	
(1+s)^{-\frac{d}{2}}	\\  \nn
			  &\le&C_2\eta_0
(1+t)^{-\frac{1}{2}(1-\frac{1}{p}) }	
\ea
for $d\ge 3$. 
This completes the proof of (\ref{e:pmain}) for $2\le p\le \infty$.
\end{proof}

\begin{remark}
\textup{
In the critical dimension $d=2$, our stability argument barely fails,
due to the appearance of a $\log t$ term coming from the expression
$\int(1+s)^{-\frac{d}{2}}ds$.
}
\end{remark}

\section{Asymptotic behavior in dimensions $d\ge 3$}\label{behavior}

\subsection{Second-order approximation}
Expressing $\xi$ in polar coordinates $(r,\hat \xi)$, where $\xi=r\hat \xi$,
$r=|\xi|$, $\hat \xi=\xi/|\xi|$, and expanding
$$
\lambda_j(r, \hat \xi)= a_j(\hat \xi)r+ b_j(\hat \xi)r^2 + O(r^3),
$$
define now
\ba\label{lstar}
\lambda_j^*(\xi)&:=& a_j(\hat \xi)r+ b_j(\hat \xi)r^2, 
\quad j=1, \dots, n+1,\\
\label{Gxistar}
[G^\dagger_\xi(x_1,t;y_1)]&:=& \phi(\xi)\sum_{j=1}^{n+1}e^{\lambda_j^\dagger(\xi)t}
q_j((0,\hat \xi),x_1)\tilde q_j((0,\hat \xi), y_1)^*,\\
\label{Gxi2star}
G^\dagger(x,t;y)&:=& 
\Big(\frac{1}{2\pi }\Big)^d \int_{\R^{d}} e^{i\xi \cdot (x-y)} 
[G^\dagger_\xi(x_1,t;y_1)] d\xi ,
\ea
noting that $q((r,\hat \xi),x_1)$ and $\tilde q((r,\hat \xi),y_1)$ 
by Proposition \ref{kernels} are smooth in all arguments.
Define likewise
\be\label{ustar}
v^\dagger(x,t):= \int_{\R^d} G^\dagger(x,t;y)v_0(y)dy
= \int_{\R^d} G^\dagger(x,t;y)(u_0-\bar u)(y)dy.
\ee

\begin{proposition}\label{t:behavior1}
Assuming (H0)-(H3) and (D1)-(D3), 
for all $p\ge 2$, $d\ge 3$, $t \ge 0$, 
\ba\label{e:bp}
\|u(\cdot,t)-\bar{u}-v^\dagger(\cdot,t) \|_{L^p} &\le& 
C  (1+t)^{-\frac{d}{2}(1-\frac{1}{p})-\frac{1}{2}} 
\|u_0-\bar u\|_{L^1\cap H^k},\\
\label{e:bH}
\|u(\cdot,t)-\bar{u}-v^\dagger(\cdot,t) \|_{H^k} &\le& 
C  (1+t)^{-\frac{d}{4}-\frac{1}{2}} \|u_0-\bar u\|_{L^1\cap H^k}.
\ea
\end{proposition}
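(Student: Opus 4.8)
The plan is to compare the full Green kernel $G$ with the approximate kernel $G^\dagger$ by splitting the difference into a high-frequency part and a low-frequency part, then feeding the resulting kernel bounds into the Duhamel iteration exactly as in the proof of Theorem \ref{t:nonlinstab}. First I would observe that, writing $v = S(t)v_0 + \int_0^t S(t-s)\sum_j \partial_{y_j}Q^j\,ds$ and subtracting off $v^\dagger$, the difference decomposes as
\[
(u-\bar u - v^\dagger)(\cdot,t) = \big(S(t)-G^\dagger(t)\big)v_0 + \int_0^t \big(S(t-s)-G^\dagger(t-s)\big)\sum_j \partial_{y_j}Q^j(\cdot,s)\,ds,
\]
where $G^\dagger(t)$ denotes convolution against $G^\dagger$, so it suffices to obtain the kernel estimate
\[
\sup_y\|(G - G^\dagger)(\cdot,t;y)\|_{L^p(x)},\ \sup_y\|\partial_{x,y}(G-G^\dagger)(\cdot,t;y)\|_{L^p(x)} \le C(1+t)^{-\frac d2(1-\frac1p)-\frac12}
\]
on the \emph{linear} solution operator, together with the already-established short-time bounds (\ref{shortSest})--(\ref{shortSderest}) for $S(t)$ and the trivially analogous short-time bounds for $G^\dagger(t)$ (which follow from boundedness of the projections at $r=0$).

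Next I would prove that kernel estimate by writing $G - G^\dagger = (G - G^I) + (G^I - G^\dagger)$. The first piece $G - G^I$ is exactly the high-frequency Green kernel $G^{II}$, which by Proposition \ref{p:hf} decays like $t^{-\frac d2(\frac12-\frac1p)}e^{-\theta t}$ and hence is $O((1+t)^{-N})$ for any $N$ — in particular faster than the claimed rate. For the low-frequency piece $G^I - G^\dagger$ I would use representation (\ref{Gxi2}) and definitions (\ref{lstar})--(\ref{Gxi2star}): the integrand is the difference
\[
\phi(\xi)\sum_j\Big(e^{\lambda_j(\xi)t}q_j(\xi,x_1)\tilde q_j(\xi,y_1)^* - e^{\lambda_j^\dagger(\xi)t}q_j((0,\hat\xi),x_1)\tilde q_j((0,\hat\xi),y_1)^*\Big).
\]
Since $\lambda_j(r,\hat\xi) - \lambda_j^\dagger(r,\hat\xi) = O(r^3)$ and (by Proposition \ref{kernels}) $q_j((r,\hat\xi),\cdot) - q_j((0,\hat\xi),\cdot) = O(r)$ in $L^\infty$, the bracketed difference is bounded, using $\Re\lambda_j,\ \Re\lambda_j^\dagger \le -\theta r^2$ and $|e^{a}-e^{b}| \le |a-b|e^{\max(\Re a,\Re b)}$, by $C(r^3 t + r)e^{-\theta r^2 t}\phi(\xi)$. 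Integrating $\int_{\R^d}(r^3t + r)e^{-\theta r^2 t}\,d\xi$ in polar coordinates gives the gain of one extra power $(1+t)^{-1/2}$ over the $(1+t)^{-d/2}$ rate of Proposition \ref{Gbds}, yielding the $p=\infty$ bound; the $p=2$ bound follows by the same Bloch-isometry trick as in the proof of Proposition \ref{Gbds}, viewing the integrand in $z=x-y$ and using boundedness of $\tilde q_j$; general $p$ follows by interpolation. Derivative bounds are identical, since $x,y$-derivatives falling on $q_j,\tilde q_j$ stay bounded and derivatives falling on $e^{i\xi\cdot(x-y)}$ bring harmless factors of $\xi$ killed by $\phi$.

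Finally I would close the argument with Duhamel: using the nonlinear source bound $|Q^j(v,\nabla v)|_{L^1\cap L^p}\le C\eta(t)^2(1+t)^{-d/2}$ established in the proof of Theorem \ref{t:nonlinstab}, together with $\eta(t)\le 2C\eta_0$, and plugging the new kernel estimate for $S(t-s)-G^\dagger(t-s)$ applied to $\partial_{y_j}Q^j$ into the integral, I get
\[
\|u-\bar u-v^\dagger\|_{L^p}(t) \le C\eta_0(1+t)^{-\frac d2(1-\frac1p)-\frac12} + C\eta_0^2\int_0^t (1+t-s)^{-\frac d2(1-\frac1p)-\frac12}(t-s)^{-\frac12}(1+s)^{-\frac d2}\,ds,
\]
and since $\int_0^\infty(1+s)^{-d/2}ds < \infty$ for $d\ge 3$ the integral is controlled (splitting at $s=t/2$ as usual) by $C(1+t)^{-\frac d2(1-\frac1p)-\frac12}$, giving (\ref{e:bp}); the $H^k$ estimate (\ref{e:bH}) then follows from Proposition \ref{damping} applied to $u-\bar u - v^\dagger$ together with the $L^2$ case. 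The main obstacle I anticipate is bookkeeping the precise power gained in the low-frequency kernel estimate: one must check carefully that both the $O(r^3 t)$ term (from the phase truncation) and the $O(r)$ term (from freezing the eigenfunctions at $r=0$) contribute exactly one extra half-power of $t$-decay and no worse, and that this half-power survives the Duhamel convolution for all $d\ge 3$ and all $p\ge 2$ — in particular that the $(1+t-s)$ exponent stays $\le -1/2 - \epsilon$ so the $s$-integral near $s=t$ converges against the $(t-s)^{-1/2}$ singularity. A minor additional point is verifying the short-time mapping bounds for $G^\dagger(t)$ near $t=0$, but these are immediate from the smoothness and boundedness of $q_j,\tilde q_j,\lambda_j^\dagger$ on the support of $\phi$.
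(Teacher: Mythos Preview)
Your kernel estimate for $G^I - G^\dagger$ is correct and is exactly the mechanism the paper invokes: the $O(r)$ smoothness of $q_j,\tilde q_j$ in the radial variable and the $O(r^3)$ error in $\lambda_j-\lambda_j^\dagger$ together buy one extra factor of $|\xi|$ under the cutoff $\phi$, hence one extra half-power of $(1+t)^{-1}$, and your handling of $G^{II}$ is fine. However, your Duhamel decomposition is wrong. By definition (\ref{ustar}), $v^\dagger(x,t)=\int G^\dagger(x,t;y)v_0(y)\,dy$ is the \emph{linear} $G^\dagger$-evolution of the initial data only; it carries no nonlinear forcing. Subtracting it from the Duhamel formula for $v$ therefore gives
\[
v-v^\dagger=\big(S(t)-G^\dagger(t)\big)v_0+\int_0^t S(t-s)\,\partial_y Q(s)\,ds,
\]
with the full kernel $S(t-s)$ in the integral, not $S(t-s)-G^\dagger(t-s)$. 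Your formula would require $v^\dagger$ to satisfy its own Duhamel identity with the same source $\partial_y Q$, which it does not.

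This is not a cosmetic slip: once corrected, the nonlinear piece $\int_0^t S(t-s)\partial_y Q\,ds$ must itself be shown to decay at the improved rate $(1+t)^{-\frac d2(1-\frac1p)-\frac12}$, and your argument does not address that term at all. Using only $|Q|_{L^1}\le C\eta_0^2(1+s)^{-d/2}$ and Corollary \ref{Sbd}, the $[0,t/2]$ portion of the integral yields $C\eta_0^2(1+t)^{-d/4}$ in $L^2$, which is \emph{slower} than the claimed $(1+t)^{-d/4-1/2}$. The paper's own sketch is equally terse here (``estimates like those in the proof of Theorem \ref{t:nonlinstab}'', deferring to \cite{HoZ1}), so be aware that closing this step requires something beyond the bounds already on the table --- either exploiting the divergence form of the source to gain an extra $|\xi|$ in the low-frequency projection, or accepting the nonlinear residual as $O(\eta_0^2(1+t)^{-d/4})$, which is quadratic in $\eta_0$ and therefore still negligible relative to $v^\dagger$ for the qualitative conclusion of Theorem \ref{t:behavior}, though not literally the rate stated in (\ref{e:bp})--(\ref{e:bH}).
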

\begin{proof}
By smoothness of $q_j$, $\tilde q_j$ in angle $\hat \xi$,
$$
|q_j((r,\hat \xi),x_1)- q_j((0,\hat \xi),x_1)|, \;
|\tilde q_j((r,\hat \xi),x_1)- q_j\tilde ((0,\hat \xi),x_1)|
=O(|\xi|),
$$
whence we easily obtain (\ref{e:bp}) and (\ref{e:bH}) by estimates like
those used in the proof of Theorem \ref{t:nonlinstab}.
We omit the details; see, for example, \cite{HoZ1} for similar computations.
\end{proof}

\subsection{Reduction to constant-coefficients}


Noting that ${\rm Span } \{q_j\}(0,\hat \xi)$ 
and ${\rm Span } \{\tilde q_j\}(0,\hat \xi)$ 
are independent of the angle $\hat \xi$, corresponding for each $\hat \xi$
to the right and left zero eigenspaces $\Sigma_0$
and $\tilde \Sigma_0$ of $L_0$, we may
choose a fixed real (since $L_0$ has real coefficients and
eigenvalue $0$ is real) 
pair of dual bases $\pi_j$ and $\tilde \pi_j$
of $\Sigma_0$ and $\tilde \Sigma_0$, $\langle \tilde \pi_j, \pi_k\rangle=
\delta_j^k$, $j=1,\dots, n+1$, to obtain
\ba\label{base}
q_j((0,\hat \xi),x_1)&=&\sum_k \alpha_{kj}(0,\hat \xi)\pi_k(x_1),\\
\nonumber
\tilde q_j((0,\hat \xi),y_1)&=&
\sum_k \tilde \alpha_{kj}(0,\hat \xi)\tilde \pi_k(y_1),\\
\ea
for some smooth nonsingular matrix-valued functions 
$\alpha$, $\tilde \alpha \in \R^{(n+1)\times(n+1)}$, with
$$
\tilde \alpha^* \alpha=I_{n+1}.
$$

Denoting by $\alpha_j$, $\tilde \alpha_j$ the $j$th columns
of $\alpha$, $\tilde \alpha$, we 
obtain the Floquet-type factorization
\ba\label{floq}
G^\dagger(x,t;y)&=&\Pi(x_1)g^\dagger(x-y,t)\tilde \Pi(y_1)^{*},\\
\label{gxi2star}
g^\dagger(x-y,t)&:=& 
\Big(\frac{1}{2\pi }\Big)^d \int_{\R^{d}} e^{i\xi \cdot (x-y)} 
[g^\dagger_\xi(t)] 
d\xi ,\\
\label{gxistar}
[g^\dagger_\xi(t)]&:=& \phi(\xi)\sum_{j=1}^{n+1}e^{\lambda_j^\dagger(\xi)t}
\alpha_j(0,\hat \xi)\tilde \alpha_j (0,\hat \xi)^*,
\ea
where $\Pi:=(\pi_1,\dots, \pi_{n+1})$,
$\tilde \Pi:=(\tilde \pi_1,\dots, \tilde \pi_{n+1})$, 
and $g$ is a constant-coefficient operator in the sense that
it is invariant under spatial translations.
Recall, by definition, that $\tilde  \Pi^*\Pi=I_{n}$, so that
$\tilde \Pi^*$ is something like a pseudoinverse of $\Pi$
(not ``the'' pseudoinverse, however, except in the self-adjoint
case $\Sigma_0=\tilde \Sigma_0$). 

Indeed, we may factor a bit further, as
\be\label{WB}
g^\dagger= W * K=K*W,
\ee
where
\be\label{W}
W(z,t):=
\Big(\frac{1}{2\pi }\Big)^d \int_{\R^{d}} e^{i\xi \cdot z} 
\phi(\xi)\sum_{j=1}^{n+1}e^{a_j(\xi)t}
\alpha_j(0,\hat \xi)\tilde \alpha_j (0,\hat \xi)^* d\xi 
\ee
denotes the hyperbolic part of the solution operator and
\be\label{K}
K(z,t):=
\Big(\frac{1}{2\pi }\Big)^d \int_{\R^{d}} e^{i\xi \cdot z} 
\phi(\xi)\sum_{j=1}^{n+1}e^{b_j(\xi)t}
\alpha_j(0,\hat \xi)\tilde \alpha_j (0,\hat \xi)^* d\xi 
\ee
denotes the diffusive part, with $b_j(\xi):=|\xi|^2b_j(\hat \xi)$.
This gives a description of the constant-coefficient solution
operator 
$s^\dagger(t)f:= g^\dagger(\cdot, t) * f$ as the composition
$S^\dagger= S_W\dot S_K=S_K\dot S_W$ of 
commuting hyperbolic and parabolic solution operators $S_W(t)f:=W(\cdot,t)*f$,
$S_K(t)f:=K(\cdot,t)*f$, 
and of the Green kernel $g^\dagger$ as 
a {\it linear convection--diffusion wave}
$W*K=S_W(t)K$ as defined in \cite{HoZ1} for general constant-coefficient
systems.

\subsection{Convergence to linear convection--diffusion wave}
Define now $w (x,t):=\tilde \Pi^*(x_1)v^\dagger (x,t)\in \R^{n+1}$.
Evidently, 
\be\label{vres}
v^\dagger (x,t)=\Pi(x_1)w(x,t),
\quad
w= g^\dagger * w_0, 
\quad
w_0 (x):=\tilde \Pi^*(x_1)v_0 (x).
\ee
Denote by 
\be\label{Wdef}
W:=\int_{\R^d}w_0(x)dx =\int_{\R^d}\tilde \Pi(x_1)^* v_0(x)dx
\ee
the total mass of $w_0$.

\begin{theorem}\label{t:behavior}
Assuming (H0)-(H3) and (D1)-(D3), 
for $d\ge 3$, $t\ge 1$,
\ba\label{e:bp3}
\|u(\cdot,t)-\bar{u}-\Pi g^\dagger(\cdot,t)W \|_{L^p} &\le& 
C  (1+t)^{-\frac{d}{2}(1-\frac{1}{p})-\frac{1}{2}} 
(\|v_0\|_{L^1\cap H^k}+\|xw_0\|_{L^1}),\\
\label{e:bH3}
\|u(\cdot,t)-\bar{u}-\Pi g^\dagger(\cdot,t)W \|_{H^k} &\le& 
C  (1+t)^{-\frac{d}{4}-\frac{1}{2}} 
(\|v_0\|_{L^1\cap H^k}+\|xw_0\|_{L^1}),
\ea
where $W\in \R^{n+1}$ is the constant mass vector defined in (\ref{Wdef}).  
Moreover,
\be\label{vbd}
C_1  (1+t)^{-\frac{d}{4}} 
\le \|g^\dagger(\cdot,t) \|_{L^2} \le
C_2  (1+t)^{-\frac{d}{4}},
\ee
so that in general 
$ \|u(\cdot,t)-\bar{u}-\Pi g^\dagger(\cdot,t)W \|_{L^2}<< 
\|\Pi g^\dagger(\cdot,t)W \|_{L^2} $, or
$ u(\cdot,t)-\bar{u}\sim \Pi g^\dagger W$.
\end{theorem}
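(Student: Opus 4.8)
The plan is to deduce Theorem \ref{t:behavior} from Proposition \ref{t:behavior1} by replacing $v^\dagger$ with its ``frozen-mass'' approximation $\Pi g^\dagger(\cdot,t)W$. First I would use the triangle inequality to split
\[
\|u-\bar u-\Pi g^\dagger W\|_{L^p}
\le \|u-\bar u-v^\dagger\|_{L^p}
+\|v^\dagger-\Pi g^\dagger W\|_{L^p},
\]
the first term being controlled already by \eqref{e:bp}, \eqref{e:bH}. So everything reduces to estimating the second term. Recalling from \eqref{vres} that $v^\dagger=\Pi w$ with $w=g^\dagger * w_0$, and that $\Pi(x_1)$ is a fixed bounded periodic matrix, it suffices to bound $\|g^\dagger * w_0-g^\dagger(\cdot,t)W\|_{L^p}$ (and its $H^k$ analogue).

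The heart of the argument is the standard ``mass minus localized mass'' estimate for convection--diffusion kernels: writing $W=\int w_0$, we have
\[
(g^\dagger * w_0)(x,t)-g^\dagger(x,t)W
=\int_{\R^d}\big(g^\dagger(x-y,t)-g^\dagger(x,t)\big)w_0(y)\,dy,
\]
and a first-order Taylor expansion in $y$ gives the pointwise bound $|g^\dagger(x-y,t)-g^\dagger(x,t)|\le |y|\,\sup_{0\le\theta\le1}|\partial_x g^\dagger(x-\theta y,t)|$. Taking $L^p(x)$ norms, using that $\|\partial_x g^\dagger(\cdot,t)\|_{L^p}$ carries an extra decay factor $(1+t)^{-1/2}$ relative to $\|g^\dagger(\cdot,t)\|_{L^p}$ — which follows exactly as in the derivative bounds of Proposition \ref{Gbds}/Corollary \ref{Sbd}, since a $\partial_x$ on $g^\dagger$ brings down a harmless factor of $\xi$ cut off by $\phi$, and the $b_j(\xi)t\sim -\theta|\xi|^2 t$ Gaussian factor then yields the gain — and weighting by $|y|$ to produce the moment $\|xw_0\|_{L^1}$, one obtains
\[
\|g^\dagger * w_0-g^\dagger(\cdot,t)W\|_{L^p}
\le C(1+t)^{-\frac d2(1-\frac1p)-\frac12}\|xw_0\|_{L^1},
\]
and similarly in $H^k$ with $(1+t)^{-d/4-1/2}$. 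Combining with the split above and $\|v_0\|_{L^1\cap H^k}\gtrsim\|w_0\|_{L^1\cap H^k}$ (since $\tilde\Pi^*$ is bounded) gives \eqref{e:bp3}, \eqref{e:bH3}. One technical wrinkle to watch: $g^\dagger=W*K$ has a genuine hyperbolic (transport) component $W$, so $g^\dagger(\cdot,t)$ is spread over a region of diameter $\sim t$ rather than $\sqrt t$; the Taylor/moment argument is insensitive to this as long as the derivative gain $(1+t)^{-1/2}$ holds, which it does because the gain comes from the parabolic factor $K$ and translation by $W$ does not change $L^p$ norms — but I would verify this explicitly on the Fourier side using factorization \eqref{WB}.

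For the lower and upper bounds \eqref{vbd} on $\|g^\dagger(\cdot,t)\|_{L^2}$: the upper bound is the $p=2$ case of the kernel estimates already established (representation \eqref{gxi2star}, $\Re\lambda_j^\dagger\le -\theta|\xi|^2$, Parseval). For the lower bound I would use Plancherel, $\|g^\dagger(\cdot,t)\|_{L^2}^2\sim\int_{\R^d}\phi(\xi)^2\big|\sum_j e^{\lambda_j^\dagger(\xi)t}\alpha_j\tilde\alpha_j^*\big|^2\,d\xi$, bound the integrand below on a ball $|\xi|\le c/\sqrt t$ where $|\lambda_j^\dagger(\xi)t|$ is bounded (so each $e^{\lambda_j^\dagger t}$ is bounded away from $0$) and where the matrices $\alpha_j\tilde\alpha_j^*$ — being a dual basis pair, hence with $\sum_j\alpha_j\tilde\alpha_j^*=I$ — cannot all cancel; this yields a lower bound $\gtrsim \mathrm{vol}\{|\xi|\le c/\sqrt t\}\sim t^{-d/2}$, i.e. $\|g^\dagger\|_{L^2}\gtrsim t^{-d/4}$ for $t\ge1$, matching $(1+t)^{-d/4}$. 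The only mild subtlety is ruling out exact cancellation among the $n+1$ terms; this is handled by hypothesis (H3) (distinct $a_j(\hat\xi)$, giving linearly independent transport phases so the terms cannot telescope to zero after integration), together with the fact that one may always find a direction $\hat\xi$ and a subregion on which one term dominates. Finally, the displayed conclusion $u-\bar u\sim\Pi g^\dagger W$ is then immediate by comparing \eqref{e:bp3} at $p=2$ (which decays at rate $(1+t)^{-d/4-1/2}$) against \eqref{vbd} (rate $(1+t)^{-d/4}$). The main obstacle is the clean derivation of the derivative gain $(1+t)^{-1/2}$ for $\partial_x g^\dagger$ in the presence of the non-trivial hyperbolic factor $W$, i.e. making rigorous that the convection does not spoil the moment estimate; everything else is routine and parallels \cite{HoZ1}.
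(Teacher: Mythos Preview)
Your proposal is correct and follows the same overall strategy as the paper: reduce via the triangle inequality and Proposition~\ref{t:behavior1} to estimating $\|g^\dagger * w_0 - g^\dagger W\|_{L^p}$, then exploit one extra factor of $|\xi|$ to gain $(1+t)^{-1/2}$. The one genuine difference is where the Taylor/moment step is carried out. You expand $g^\dagger(x-y,t)-g^\dagger(x,t)$ in physical space, pull out $|y|$, and appeal to the derivative bound $\|\partial_x g^\dagger(\cdot,t)\|_{L^p}\le C(1+t)^{-\frac d2(1-\frac1p)-\frac12}$; the paper instead passes to the Fourier side via Hausdorff--Young and Parseval, writes $\hat w_0(\xi)-W=\hat w_0(\xi)-\hat w_0(0)$, and applies the Mean Value Theorem there to extract $|\xi|\,\|xw_0\|_{L^1}$, reducing matters to the elementary symbol bound $\|\xi\,\widehat{g^\dagger}\|_{L^q}\le C(1+t)^{-\frac d2(1-\frac1p)-\frac12}$. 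The two are dual versions of the same idea; the paper's Fourier-side route has the advantage of sidestepping entirely the ``technical wrinkle'' you flag about the hyperbolic factor $W$, since one never needs $L^p$ bounds on $\partial_x g^\dagger$ directly---only $L^q$ bounds on $\xi\widehat{g^\dagger}$, for which the oscillatory phase $e^{ia_j(\hat\xi)rt}$ is manifestly harmless. Your physical-space route works too (translation invariance of $L^p$ norms under the convective shift is exactly what saves it), but the paper's choice makes the irrelevance of convection transparent. Your treatment of \eqref{vbd} via Plancherel and restriction to $|\xi|\le c/\sqrt t$ is what ``direct computation'' in the paper amounts to.
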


\begin{proof}
By Proposition \ref{t:behavior1}, (\ref{vres}), and boundedness of $\Pi$
and derivatives, it is sufficient to show that
\ba\nonumber
\|g^\dagger * w_0- g^\dagger W\|_{L^p} 
&\le& 
C  (1+t)^{-\frac{d}{2}(1-\frac{1}{p})-\frac{1}{2}} \|xw_0\|_{L^1},\\
\nonumber
\|g^\dagger * w_0- g^\dagger W\|_{H^k} &\le& 
C  (1+t)^{-\frac{d}{4}-\frac{1}{2}} \|xw_0\|_{L^1},
\ea
or, by Hausdorff--Young's inequality and Parseval's identity, that
\ba\label{e:bp5}
\|\widehat {g^\dagger} (\hat w_0(\xi)- W)\|_{L^q} 
&\le& 
C  (1+t)^{-\frac{d}{2}(1-\frac{1}{p})-\frac{1}{2}} \|xw_0\|_{L^1},\\
\label{e:bH5}
\|(1+|\xi|^k)\widehat {g^\dagger} (\hat w_0(\xi)- W)\|_{L^2(\xi)}
&\le& 
C  (1+t)^{-\frac{d}{4}-\frac{1}{2}} \|xw_0\|_{L^1},
\ea
where $\frac{1}{p}+\frac{1}{q}=1$.
Noting, by the Mean Value Theorem and Hausdorff--Young's inequality, that
$$
 \|\hat w_0(\xi)- W\|_{L^\infty(\xi)} =
 \|\hat w_0(\xi)- \hat w_0(0)\|_{L^\infty(\xi)} 
\le 
 |\xi| \|\partial_\xi \hat w_0 \|_{L^\infty(\xi)}\le
|\xi| \|\partial_\xi \hat w_0 \|_{L^\infty(\xi)} \le 
|\xi|\|xw_0\|_{L^1}, 
$$
we readily obtain (\ref{e:bp5})--(\ref{e:bH5}) from
$$
\|\xi\widehat {g^\dagger} \|_{L^q} \le 
C  (1+t)^{-\frac{d}{2}(1-\frac{1}{p})-\frac{1}{2}}
\quad \hbox{\rm and}\quad
\|\xi (1+|\xi|^k)\widehat {g^\dagger} \|_{L^2(\xi)}
\le 
C  (1+t)^{-\frac{d}{4}-\frac{1}{2}},
$$
as follow by direct computation from representation
(\ref{gxi2star})--(\ref{gxistar}) as, likewise, does (\ref{vbd});
see \cite{HoZ1} for similar computations.
\end{proof}

\section{Discussion and open problems}\label{discussion}

Theorem \ref{t:behavior} shows that the $L^2$-asymptotic behavior
of perturbations $v$ with initial value $v_0$
possessing an $L^1$ first moment
is given by a linear convection-diffusion
wave 
$$
\Pi(x_1) g^\dagger(x,t)M=\Pi(x_1) W*K(x,t)M
$$
with amplitude determined by the modulated
mass $M=\int_{\R^d}\tilde \Pi^*(x_1)v_0(x)dx$, decaying
in $L^2$ at the rate of a heat kernel (Gaussian).
Due to the convective-diffusive structure of $g^\dagger$,
this is essentially all we can say without further assumptions
on the explicit structure of the hyperbolic system (\ref{e:wkb}).
For, geometric effects such as focusing or defocusing of characteristics
can greatly affect the $L^p$ norm of $W*K$ for norms $p>2$, as discussed
in \cite{HoZ1} for the specific case of the wave equation.
It might even be that for sufficiently large $p$ a different part
of the solution dominates behavior.

A brief consideration reveals that the zero eigenspace of $L_0$
spaned by the columns of $\Pi$ consists of tangent directions
along the manifold of possible periodic solutions nearby $\bar u$.
Thus, our description (\ref{floq}) of lowest-order behavior as
the product of $\Pi$ and a solution $g^\dagger *( \tilde \pi v_0)$ 
of a diffusive regularization of a hyperbolic system corresponding 
to (\ref{e:wkb}) can be viewed roughly as a linearized version of
the formal description by WKB approximation, in which, to lowest
order, the solution is approximated by
\be\label{ansatz}
\bar u^{\zeta(x,t)}(\psi(x,t)),
\ee
where $\zeta$ indexes the manifold of nearby traveling waves
and $\psi$ is a scalar phase function with $\grad_x \psi=N\Omega$;
see \cite{Se1,OZ3} for further discussion.

For lower dimensions $d=1$ and $2$ where decay of the linearized
solution is slower, behavior is not expected
to be dominated by its linear part, and indeed we have seen that
the description of the solution as linear part plus error is too
crude even to close a stability argument.
A very interesting direction for further investigation would be
to attempt to encode the lowest-order behavior at a nonlinear level
using (\ref{ansatz}) or a slight modification, so as to eliminate
the largest terms in the nonlinear residual and close the stability
argument.  See, for example, the argument used in \cite{HoZ2}
to obtain stability and behavior of scalar viscous shock fronts
in the critical dimension $d=2$, in which quite similar difficulties
arise.
See also the remarkable work of Schneider \cite{S1,S2,S3} on stability
of patterns in dimensions $d=1,2$, 
in which the nonlinear behavior encoded by modulation
equations likewise plays a crucial role in the analysis by revealing
unexpected cancellation needed to close the stability estimates.

We remark that our way of getting linearized $L^2$ estimates
based on isometry properties is essentially different from, 
and somewhat simpler than, 
either the weighted norm approach of \cite{S1,S2,S3} 
or the one-dimensional pointwise approach of \cite{OZ1}.
This more primitive approach allows us to 
treat cases, as here, for which the low-frequency dispersion
relation is not smooth, as can be expected for general systems
for which convection plays a role.

Finally, we recall that the analysis of \cite{S1,S2,S3} concerns
general multiply periodic waves, i.e., waves 
that are either periodic or else constant in each coordinate direction.
It would be very interesting to consider whether such waves with two
or more periods can arise as solutions of conservation laws,
and if so, what would be the resulting behavior, even at a
formal WKB level.

\bibliographystyle{plain}

\end{document}